\newif\ifdraft

\documentclass[12pt]{amsart}

\usepackage[margin=1.35in,marginpar=1in]{geometry}
\usepackage{enumitem}
\usepackage{xcolor}
\usepackage{url}
\usepackage{amsmath}
\usepackage{bbm}
\usepackage{amsthm}
\usepackage{comment}
\usepackage{soul}

\ifdraft
\usepackage{fancyhdr,datetime2}
\fancyhf{}
\fancyhead[RO,LE]{\thepage}
\fancyhead[CE]{\leftmark} 
\fancyhead[CO]{\rightmark} 
\fancyfoot[C]{\\ \color{red}\fbox{\textbf{DRAFT} (\DTMnow)}} 

\fancypagestyle{firstpage}{%
	
	\fancyhf{}
	\fancyhead[C]{\color{red}\fbox{\textbf{DRAFT} (\DTMnow) -- NOT FOR DISTRIBUTION}}
	\fancyfoot[C]{\thepage}
}
\fi

\numberwithin{equation}{section}

\theoremstyle{plain}
\newtheorem*{RPF}{Ruelle--Perron--Frobenius (RPF) Theorem}
\newtheorem{theorem}{Theorem}[section]

\newtheorem{lemma}[theorem]{Lemma}

\theoremstyle{definition}
\newtheorem{definition}[theorem]{Definition}
\newtheorem{example}[theorem]{Example}

\newcommand{\foot}[1]{\marginpar{\raggedright\tiny #1}}
\newcommand{\vc}[1]{\foot{VC: #1}}

\setenumerate{itemsep=1ex}

\subjclass[2020]{Primary: 37D35, 37D20. Secondary: 37D25, 37H05}
\keywords{Ruelle--Perron--Frobenius theorem, convex cones, Hilbert projective metric, Birkhoff contraction theorem, nonstationary dynamics}

\title{A nonstationary Ruelle--Perron--Frobenius theorem}
\author{Vaughn Climenhaga and Gregory Hemenway}
\address{Dept.\ of Mathematics, University of Houston, Houston, TX 77204}
\email{climenha@math.uh.edu}
\address{Dept.\ of Mathematics, The Ohio State University, Columbus, OH 43210}
\email{hemenway.math@gmail.com}

\date{\today}

\thanks{VC: This material is based upon work supported by the National Science Foundation under Award No.\ DMS-1554794, DMS-2154378, and DMS-2453314.
GH: This material is based upon work supported by the National Science Foundation under Award No.\ DMS-2316687.%
}


\newcommand{\NN}{\mathbbm{N}}
\newcommand{\RR}{\mathbbm{R}}
\newcommand{\ZZ}{\mathbbm{Z}}

\newcommand{\LL}{\mathcal{L}}

\newcommand{\bT}{\mathbf{T}}
\newcommand{\bX}{\mathbf{X}}
\newcommand{\bph}{\boldsymbol{\ph}}
\newcommand{\bl}{\boldsymbol{\lambda}}
\newcommand{\bLam}{\boldsymbol{\Lambda}}
\newcommand{\bL}{\mathbf{L}}
\newcommand{\bs}{\boldsymbol{\sigma}}

\newcommand{\dF}{d_{\mathcal{F}}}

\newcommand{\ph}{\varphi}

\newcommand{\one}{\mathbbm{1}}
\newcommand{\eps}{\varepsilon}

\DeclareMathOperator{\diam}{diam}


\newcommand{\HC}{H} 
\newcommand{\Hexp}{\beta} 
\newcommand{\allA}{\ref{A:pre}--\ref{A:Hol}}
\newcommand{\TE}{\tau} 
\newcommand{\ip}[1]{\langle #1 \rangle}
\newcommand{\confact}{\rho}
\newcommand{\edc}{\gamma}


\begin{document}
\begin{abstract}
The Ruelle--Perron--Frobenius theorem is a powerful tool in the study of equilibrium measures and their statistical properties. 
We prove a nonstationary version of this theorem 
under general conditions involving 
an invariant sequence of real convex cones in function space. 
\end{abstract}

\maketitle

\ifdraft
\thispagestyle{firstpage}
\pagestyle{fancy}
\fi

\section{Introduction}
\label{sec:intro}

Consider a compact metric space $X$ and a continuous map $T\colon X\to X$ that is uniformly expanding, locally onto, and topologically exact.\footnote{
See \ref{A:exp} and \ref{A:exact} for precise definitions. The last two conditions follow from uniform expansion whenever $X$ is a compact connected manifold.} Given a H\"older continuous $\ph\colon X\to \RR$, which we call a  \emph{potential}, the Ruelle--Perron--Frobenius (RPF) operator associated to $\ph$ is the bounded linear operator $L\colon C(X) \to C(X)$ defined by
\begin{equation}\label{eqn:L}
(Lf)(x) = \sum_{y\in T^{-1}(x)} e^{\ph(y)} f(y)
\text{ for all } x\in X.
\end{equation}
The following classical theorem plays a central role in the study of thermodynamic formalism for the system $(X,T,\ph)$.
See \cite{Bowen} for a proof in the case when $(X,T)$ is a mixing subshift of finite type, or \cite[Chapter 4]{PU10} and \cite[\S12.1]{VO16} for the more general setting considered here.

\begin{RPF}
Writing $\lambda>0$ for the spectral radius of the operator $L$, the following are true.
\begin{enumerate}[leftmargin=*]
\item There is a unique Borel probability measure $m$ on $X$ such that $L^* m = \lambda m$.
\item There is a unique H\"older $h\colon X\to (0,\infty)$ such that $Lh = \lambda h$ and $\int h\,dm = 1$.
\item For any H\"older $f\colon X\to \RR$, we have $\lambda^{-n} L^n f \to (\int f \,dm) h$ exponentially fast.
\end{enumerate}
\end{RPF}

With $h,m$ as in the RPF theorem, the probability measure $\mu(E) = \int_E h\,dm$ is $T$-invariant and the unique equilibrium measure for $\ph$. That is, for every $T$-invariant $\nu$, we have $h_\nu(T) + \int\ph\,d\nu \leq \log\lambda$, with equality if and only if $\nu=\mu$.

\subsection{Nonstationary RPF theorems for uniformly expanding maps}\label{sec:unif-exp}

Now we replace the stationary system $(X,T,\ph)$ with a nonstationary setting.
Fix sequences of compact metric spaces $X_n$, continuous maps $T_n \colon X_n\to X_{n+1}$, and continuous potentials $\ph_n\colon X_n\to \RR$. 
We will investigate the behavior of the maps
\[
\bT_n^k = T_{n+k-1} \circ \cdots \circ T_{n+1} \circ T_n \colon X_n \to X_{n+k}.
\]
We assume for now that the sequence $\{(X_n,T_n,\ph_n)\}_n$ satisfies the properties listed below. In \S\ref{sec:abstract}, we discuss a more general setting where our results continue to hold.
Some results concern one-sided sequences ($n\in \NN\cup \{0\})$ and others concern two-sided sequences ($n\in \ZZ$); in the conditions below ``for all $n$'' will mean whichever of these two is relevant for any given result.

\begin{enumerate}[label={\upshape{(A\arabic{*})}},leftmargin=*]
\item\label{A:pre} \emph{Compact, continuous, and bounded degree:}
{Each $(X_n,d_n)$ is a compact metric space, and each $T_n \colon X_n \to X_{n+1}$ is continuous}. Moreover, there exists $D\in\NN$ such that for every $n$ and every $x\in X_{n+1}$, we have $1\leq \#T_n^{-1}(x) \leq D$.
\item\label{A:exp} \emph{Uniformly expanding and locally onto:}
there exist $\delta>0$ and $\confact\in (0,1)$ such that for all $n$ and all $x,x' \in X_n$
 satisfying $d_n(x,x') \leq \delta$, we have
\[
d_n(x,x') \leq \confact\, d_{n+1}(T_n(x),T_n(x')).
\]
Moreover, for any $x\in X_n$, we have $T_n(B(x,\delta)) \supset B(T_n x,\delta)$ in $X_{n+1}$.
\item\label{A:exact} \emph{Uniform topological exactness:}
with $\delta>0$ as in \ref{A:exp}, 
there exists  $\TE\in \NN$ such that for every $n$ and $x\in X_n$, we have $\bT_n^\TE(B_{d_n}(x,\delta)) = X_{n+\TE}$.
\item\label{A:Hol} \emph{Uniform H\"older continuity:}
there exist $\HC>0$ and $\Hexp\in (0,1)$ such that for all $n$ and all $x,x' \in X_n$ with $d_n(x,x') \leq \delta$, we have
\[
|\ph_n(x) - \ph_n(x')| \leq \HC d_n(x,x')^{\Hexp}.
\]
Moreover, there exists $V>0$ such that for all $n$, we have $\|\ph_n\|_\infty \leq V$.
\end{enumerate}

\vskip5pt
In this nonstationary setting, the RPF operator $L$ from \eqref{eqn:L} is replaced by a sequence of operators $L_n\colon C(X_n) \to C(X_{n+1})$ defined by
\begin{equation}\label{eqn:Ln}
(L_nf)(x) = \sum_{y\in T_n^{-1}(x)} e^{\ph_n(y)} f(y) \text{ for all } x \in X_{n+1}.
\end{equation}
We will write compositions of the RPF operators as follows:
\begin{align*}
\bL_n^k &= L_{n+k-1} \circ \cdots \circ L_{n+1} \circ L_n \colon C(X_n) \to C(X_{n+k}).
\end{align*}
Writing the nonstationary Birkhoff sums as $\bph_n^k(y) := \sum_{j=0}^{k-1} \ph_{n+j}(\bT_n^j y)$, we see that
\[
(\bL_n^kf)(x) = \sum_{y\in (T_n^k)^{-1}(x)} e^{\bph_n^k(y)} f(y) \text{ for all } x \in X_{n+k}.
\]
Writing $M(X_n)$ for the space of positive finite Borel measures on $X_n$,
we will also study the dual operators $L_n^* \colon M(X_{n+1}) \to M(X_n)$ defined by
\begin{equation}\label{eqn:Ln*}
\int_{X_n} f \,d(L_n^* m) = \int_{X_{n+1}} (L_n f) \,dm \quad\text{for all } f\in C(X_{n}),\ m\in M(X_{n+1}).
\end{equation}
This defines a positive Borel measure $L_n^* m$ by the Riesz representation theorem and positivity of the operator $L_n$: if $f>0$ then $L_n f>0$. Writing $\langle f,m \rangle = \int_{X_n} f\,dm$ when $f\in C(X_n)$ and $m\in M(X_n)$, \eqref{eqn:Ln*} can be rewritten as
\begin{equation}\label{eqn:Ln*2}
\langle f, L_n^* m \rangle = \langle L_n f , m \rangle.
\end{equation}

\vskip5pt
In Theorem \ref{thm:forward} below, we consider a one-sided infinite sequence ($n\in \NN \cup \{0\}$) and construct analogues of $\lambda$ and $m$, but not of $h$. In Theorem \ref{thm:two-sided}, we consider a two-sided infinite sequence ($n\in \ZZ$) and obtain a full analogue of the RPF theorem. Theorem \ref{thm:pseudo-inv} describes the analogue of the $T$-invariant measure $\mu$. Theorem \ref{thm:HolDep-intro} describes the H\"older continuous dependence of these various objects on the sequence $\{(T_n,\ph_n)\}_n$.

Our results will be proved using convex cones $\Lambda_n \subset C(X_n)$. Given $\{(X_n,T_n,\ph_n)\}_n$ satisfying \allA,
we write
\begin{equation}\label{eqn:Cn+}
C_n^+ := \{ f\in C(X_n) : f \geq 0,\ f\not\equiv 0\}
\end{equation}
for the space of nonnegative functions on $X_n$ that are not identically $0$. With $\Hexp,\delta$ as in \ref{A:Hol} and \ref{A:exp},
we consider for each $Q>0$ the following families of functions:
\begin{equation}\label{eqn:cone}
\Lambda_n(Q) := \{ f\in C_n^+: f(x) \leq e^{Q d_n(x,x')^\Hexp} f(x') \text{ whenever } d_n(x,x') \leq \delta \}.
\end{equation}
We will write $\one$ for the constant function that takes the value $1$, without explicitly indicating which space it is defined on, since this can be deduced from the context. See \S\ref{sec:prior} for a description of prior work on similar problems using this approach.

In \S\ref{sec:cones}, we describe the machinery of closed convex cones and the Birkhoff contraction theorem that we use to prove Theorems \ref{thm:forward} and \ref{thm:two-sided}. In particular, we formulate an abstract set of conditions on a sequence of operators on function spaces that can be verified in the setting of this section (Theorem \ref{thm:get-cones}) and that suffice to prove abstract analogues of the theorems here (Theorems \ref{thm:gen-forward}, \ref{thm:gen-two-sided}, and \ref{thm:continuity}).

\theoremstyle{plain}
\newtheorem{theo}{\textbf{Theorem}}[theorem]
\renewcommand*{\thetheo}{\textbf{\Roman{theo}}}

\begin{theo}\label{thm:forward}
Suppose $\{(X_n,T_n,\ph_n)\}_{n\geq 0}$ satisfy conditions \allA. 
Then for $Q$ sufficiently large and $\Lambda_n = \Lambda_n(Q)$ as in \eqref{eqn:cone}, the following are true.
\begin{enumerate}[label=\textbf{\upshape{[\Alph{*}]}},leftmargin=*]
\item\label{get-lm}
For every $n\geq0$ and every sequence of measures 
$\bs = \{\sigma_\ell \in M(X_\ell)\}_{\ell\geq n}$,
the following limits exist, and are independent of $\bs$: 
\begin{equation}\label{eqn:lnmn-0}
\lambda_n = \lim_{k\to\infty} \frac{\langle \bL_n^k \one, \sigma_{n+k} \rangle}{\langle \bL_{n+1}^{k-1} \one, \sigma_{n+k} \rangle}
\quad\text{and}\quad
m_n = \lim_{k\to\infty} \frac{(\bL_n^k)^* \sigma_{n+k}}{\langle \one, (\bL_n^k)^* \sigma_{n+k} \rangle}.
\end{equation}
The second limit is with respect to the weak* topology on $M(X_n)$.
\item\label{get-L*}
Each real number $\lambda_n$ is positive and each measure $m_n$ is a probability measure. They satisfy the relationship $L_n^* m_{n+1} = \lambda_n m_n$.
\item\label{get-m!}
The sequence $(m_n)_{n\geq 0}$ is the unique sequence of Borel probability measures $m_n \in M(X_n)$ with the property that $L_n^* m_{n+1}$ is a scalar multiple of $m_n$. 
\item\label{get-exp1}
The rate of convergence in \eqref{eqn:lnmn-0} is uniformly exponential in the following sense: there are constants $C_1>0$ and $\edc\in (0,1)$ such that for any $n\geq 0$, any $k\geq \TE$, any $\sigma\in M(X_{n+k})$, and any $f\in \Lambda_n$, we have
\begin{equation}\label{eqn:lnmn-exp}
\Big| \log \lambda_n - \log \frac{\langle \bL_n^k \one, \sigma \rangle}{\langle \bL_{n+1}^{k-1} \one, \sigma \rangle} \Big| \leq C_1 \edc^k 
\quad\text{and}\quad
\Big| \langle f, m_n \rangle - \frac{\langle f, (\bL_n^k)^*\sigma \rangle}{\langle \one, (\bL_n^k)^* \sigma \rangle} \Big|
\leq C_1 \|f\| \edc^k.
\end{equation}
\end{enumerate}
\end{theo}

For future reference, we point out that (as will be made clear in the proofs), the constants $Q,C_1,\edc$ in Theorem \ref{thm:forward} depend on the parameters $D,\delta,\rho,\tau,H,\beta,V$ in \allA, but not on the specific choice of the sequence $\{(T_n,\ph_n)\}_n$. The same is true in Theorem \ref{thm:two-sided} below.

It will be convenient to  define $\bl_n^k$ analogously to $\bT_n^k$ and $\bL_n^k$, writing
\[
\bl_n^k := \lambda_{n+k-1} \cdots \lambda_{n+1} \lambda_n = \prod_{j=0}^{k-1} \lambda_{n+j}.
\]
 Theorem \ref{thm:forward} contains no analogue of the eigenfunction $h$ from the stationary RPF theorem. 
For this, we must take a limit along backwards orbits, so we need to begin with a bi-infinite sequence.

\begin{theo}\label{thm:two-sided}
Consider a two-sided sequence  $\{(X_n,T_n,\ph_n)\}_{n\in \ZZ}$ satisfying conditions \allA. Then for all sufficiently large $Q$, once again taking $\Lambda_n = \Lambda_n(Q)$ as in \eqref{eqn:cone}, conclusions \ref{get-lm}--\ref{get-exp1} hold for all $n\in \ZZ$, and so do the following.
\begin{enumerate}[label=\textbf{\upshape{[\Alph{*}]}},leftmargin=*]
\setcounter{enumi}{4}
\item\label{get-h} 
For every $n\in \ZZ$, the following limit exists (in the uniform norm) for any sequence of functions $\{ f_\ell \in \Lambda_\ell \}_{\ell \leq n}$, and is independent of the choice of $f_\ell$: 
\begin{equation}\label{eqn:hn}
h_n = \lim_{k\to \infty} \frac{\bL_{n-k}^k f_{n-k}}{\bl_{n-k}^k 
\langle f_{n-k}, m_{n-k} \rangle}.
\end{equation}
\item\label{get-Lh} 
For every $n\in \ZZ$, 
the function $h_n$ is strictly positive, lies in $\Lambda_n$, and satisfies $\int h_n \,d m_n = 1$ and $L_n h_n = \lambda_n h_{n+1}$.
\item\label{get-h!}
The sequence $(h_n)_{n\in \ZZ}$ is the unique sequence of functions in $\Lambda_n$ such that $\int h_n \,dm_n = 1$ and $L_n h_n$ is a scalar multiple of $h_{n+1}$ for all $n\in \ZZ$. 
\item\label{get-exp2} 
The convergence in \eqref{eqn:hn} is uniformly exponential in the following sense: There exist $C_2>0$ and $\edc\in(0,1)$ such that for any $n\in\ZZ$, any $k\geq\TE$, and any 
$f\in \Lambda_{n-k}$, we have
\begin{equation}\label{eqn:hn-exp}
 \Big\|h_n-\dfrac{\bL_{n-k}^kf}{(\bl_{n-k}^{k})\langle f,m_{n-k}\rangle}\Big\|\leq C_2\edc^k,
 \end{equation}
\end{enumerate} 
\end{theo}

Observe that the conclusions \ref{get-lm}--\ref{get-exp2} do not involve $T_n$ or $\ph_n$ other than through $L_n$ and $\Lambda_n$. This suggests the approach that we will carry out in \S\ref{sec:abstract}, of formulating abstract conditions on the operators and cones that lead to the conclusions here, without reference to the specific dynamics and potentials that generate them.

We do not have a notion of measure-theoretic entropy or of equilibrium measure in the nonstationary setting, 
so there is no complete analogue of the statement from the stationary RPF theorem that multiplying the eigenfunction by the eigenmeasure produces the unique equilibrium measure. However,
we still have the following.

\begin{theo}\label{thm:pseudo-inv}
Consider a sequence of compact metric spaces $\{X_n\}_{n\in\ZZ}$,
 a sequence of continuous maps $\{T_n \colon X_n \to X_{n+1} \}_{n\in\ZZ}$, 
and a sequence of continuous potentials $\{\ph_n \colon X_n \to \RR\}$. Let $L_n \colon C(X_n) \to C(X_{n+1})$ be the operators defined in \eqref{eqn:Ln}. Suppose that $\lambda_n \in (0,\infty)$, $m_n \in M(X_n)$, and $h_n \in C_n^+$ are such that
\[
L_n h_n = \lambda_n h_{n+1},\quad
L_n^* m_{n+1} = \lambda_n m_n,\quad
\text{and }
\langle h_n, m_n \rangle = 1
\quad\text{for every } n\in \ZZ.
\]
Then the measures $\mu_n \in M(X_n)$ defined by $\mu_n(E) = \int_E h_n\,dm_n$ have the following pseudo-invariance property:
$(T_n)_*\mu_n=\mu_{n+1}$ for all $n\in\ZZ$.

If in addition we have $h_n>0$, then considering the normalized potential
\begin{equation}\label{eqn:normalized}
\Tilde{\varphi}_n :=\varphi_n+\log h_n-\log h_{n+1}\circ T_n-\log\lambda_n
\end{equation}
and writing $\Tilde{L}_n$ for the associated operator as in \eqref{eqn:Ln}, so that
\begin{equation}\label{eqn:iterated-cocycle}
(\widetilde{\bL}_n^k f)
= \frac {\bL_n^k(h_n f)}{\bl_n^k h_{n+k}} .
\end{equation}
for every $n\in\ZZ$ and $k\in\NN$,
then we have
\begin{equation}\label{eqn:tilde-L}
\widetilde{L}_n \one = \one
\quad\text{and}\quad
\widetilde{L}_n^* \mu_{n+1} = \mu_n
\quad\text{for every $n\in \ZZ$.}
\end{equation}
\end{theo}


Finally, we describe how $\lambda_n$, $m_n$, and $h_n$ depend on the maps $T_n$ and the potentials $\ph_n$.
Given a sequence $(X_n,d_n)$ of compact metric spaces, fix constants $D\in\NN$, $\delta>0$ and $\rho\in(0,1)$, $\tau\in\NN$, and $V,H,\beta>0$ corresponding to those in Assumptions \allA, respectively. Let $\mathcal{F}=\mathcal{F}(D,\delta,\rho,\tau,H,\beta,V)$ denote the set of pairs $(\bT,\bph) = ((T_n,\ph_n))_{n\in \ZZ}$ that satisfy \allA\ for these constants. Similarly, let $\mathcal{F}^+$ denote the set of pairs $(\bT,\bph) = ((T_n,\ph_n))_{n\geq 0}$ satisfying \allA.

Consider the following $[0,\infty]$-valued metric $\dF$ on $\mathcal{F}$:
\begin{equation}
\label{eqn:dF}
\dF((\bT,\bph),(\bT',\bph')) := \sum_{n\in \ZZ} 2^{-|n|} 
\Big(\sup_{x\in X_n} d_{n+1}(T_n(x), T_n'(x))
+ \|\ph_n - \ph_n'\|_\infty \Big).
\end{equation}
Define $d_{\mathcal{F}^+}$ on $\mathcal{F}^+$ similarly, using a sum over $n\geq 0$ instead of over all $n\in \ZZ$.

\begin{theo}\label{thm:HolDep-intro}
The map $(\bT,\bph) \mapsto (\lambda_0,m_0,h_0)$ is locally H\"older continuous with respect to $\dF$, in the following sense:
\begin{enumerate}[label=\upshape{(\alph{*})},leftmargin=*]
\item the real number $\lambda_0$ depends locally  H\"older continuously on $(\bT,\bph)$;
\item for every $f \in \Lambda_0$, the integral $\int_{X_0} f \,dm_0$ depends locally  H\"older continuously on $(\bT,\bph)$;
\item the function $h_0 \in \Lambda_0 \subset C(X_0)$ depends locally H\"older continuously on $(\bT,\bph)$ with respect to the uniform metric on $C(X_0)$.
\end{enumerate}
Shifting the sequence gives local H\"older continuity of $(\lambda_n,m_n,h_n)$ for every $n$.
\end{theo}

\subsection{Prior work}\label{sec:prior}

The set $\Lambda_n(Q) \subset C(X_n)$ in \eqref{eqn:cone} is an example of a \emph{closed convex cone}. In \S\ref{sec:cones}, we will provide precise definitions and describe how the \emph{Hilbert projective metric} can be used to formulate conditions under which a RPF theorem holds. In this section, we briefly describe some of the history of these ideas, and how they fit into thermodynamic formalism more broadly. It is worth pointing out that the RPF theorem was originally proved in the 1970s by other methods, not via convex cones \cite{Bowen}.

Garrett Birkhoff's 1957 paper \cite{gB57} used the Hilbert projective metric to prove the Perron--Frobenius theorem for positive matrices.
The idea of extending this approach to the (infinite-dimensional) Ruelle--Perron--Frobenius Theorem can be traced back to work of Ferrero and Schmitt \cite{FS79,FS81}. 
Cone methods became more widely used in dynamical systems and ergodic theory following Liverani's 1995 work on decay of correlations \cite{cL95}; see Baladi's book \cite{vB00} for an overview of the subject around this time.
We also refer to \cite{EN95a,EN95} for another proof of Birkhoff's contraction theorem and an application to an RPF-type result in a more abstract setting, and to \cite{fN04} for some explicit computations of the decay rate of correlations using cone techniques.

Moving beyond the uniformly hyperbolic setting,
cone techniques were applied to nonuniformly expanding systems by Viana and Varandas \cite{VV10} and by Castro and Varandas \cite{CV13}. 
In \cite{gH23}, the second author of the present paper developed nonstationary versions of some of these results to study nonuniformly expanding skew products, obtaining a description of the conditional measures of equilibrium states along fibers.
(For uniformly expanding skew products, results along these lines had been obtained by Denker and Gordin \cite{DG99}.)
Although our Theorems \ref{thm:forward} and \ref{thm:two-sided} are formulated for uniformly expanding maps, 
we set up the proofs in such a way that our general results (Theorems \ref{thm:gen-forward} and \ref{thm:gen-two-sided}) can be applied to these nonuniformly expanding examples as well (see \S\ref{sec:abstract}). Indeed, one motivation for the present work was to clearly identify the conditions that must be verified in order to apply Birkhoff's contraction theorem to Hilbert's projective metric, with the goal of paving the way for future generalizations.

The skew products studied in \cite{DG99,gH23} are closely connected to random dynamical systems, where one works with respect to a given invariant measure for the base transformation.
Thermodynamic formalism in random dynamics has been studied at least since the early 1990s; see for example \cite{tB92,K92,KK96}.
This direction was developed further in the book by Mayer, Skorulski, and Urba\'nski \cite{MSU}. Some of the non-uniformly expanding results mentioned above have been extended to the random setting by Stadlbauer, Suzuki, and Varandas \cite{SSV21}. 
Our setting is \emph{a priori} more general than the corresponding results for skew products and random transformations, since we will impose no requirement that our sequence of maps $T_n$ comes from any base dynamics (see \S\ref{sec:abstract}).

We conclude by pointing out that here we restrict our attention to real potential functions and real cones, but cone techniques have also been developed for complex transfer operators and complex projective metrics \cite{hR10,yH20,yH22,yH23}.

\subsection{Outline of the paper}

In \S\ref{sec:cones}, we describe the general machinery of convex cones and the Hilbert metric, and state abstract versions of Theorems \ref{thm:forward}, \ref{thm:two-sided}, and 
\ref{thm:HolDep-intro}.
In \S\ref{sec:get-cones}, we describe how Conditions \allA\ imply the abstract cone conditions laid out in \S\ref{sec:cones}. In \S\ref{sec:gen-cones}, we recall some general results on the cone of non-negative functions, which are used in \S\S\ref{sec:thm1}--\ref{sec:thm2} when we prove the abstract versions of Theorems \ref{thm:forward}--\ref{thm:two-sided}.
In \S\ref{sec:convergence-pseudo}, we prove Theorem \ref{thm:pseudo-inv} on convergence to pseudo-invariant measures, and in \S\ref{sec:Holder}, we prove Theorem \ref{thm:HolDep-intro} (and its generalization) on H\"older dependence.

\section{A general result using convex cones}\label{sec:cones}

To describe the strategy for the proofs of Theorems \ref{thm:forward} and \ref{thm:two-sided}, we first recall, in \S\ref{sec:hilbert}, the machinery of convex cones and the Birkhoff contraction theorem.
Then in \S\ref{sec:abstract}, we formulate an abstract set of cone conditions that can be deduced from \allA, and that lead to the conclusions of Theorems \ref{thm:forward} and \ref{thm:two-sided}.

\subsection{Convex cones and the Hilbert metric}\label{sec:hilbert}

See \cite{cL95,fN04,VO16} for more details and proofs of the results in this section that are not proved here.

\begin{definition}
Let $\Omega$ be a Banach space. A subset $\Lambda\subset \Omega\setminus\{0\}$ is a \emph{cone} if  $t \Lambda = \Lambda$ for all $t>0$. It is \emph{convex} if for every $f,g\in \Lambda$, we have $f + g\in \Lambda$. A \emph{closed convex cone} is a convex cone $\Lambda$ for which $\Lambda \cup \{0\}$ is norm-closed.
\end{definition}

\begin{example}
The sets $C_n^+$ and $\Lambda_n(Q)$ from 
equations \eqref{eqn:Cn+} and \eqref{eqn:cone}, respectively,
are closed convex cones.
\end{example}

We can define a partial ordering $\preceq$ on a closed convex cone $\Lambda$ by saying that $f\preceq g$ if and only if $g-f\in \Lambda\cup\{0\}$. 
Let 
\begin{equation}\label{eqn:AB}
A(f,g)=\sup\{t>0\colon tf\preceq g\}\ \text{and\ } B(f,g)=\inf\{t>0\colon g\preceq tf\}.
\end{equation}
If $g$ is a scalar multiple of $f$, then $A=B$. Otherwise, the open interval $(A,B)$ is the interior of the interval of values $t \in (0,\infty)$ for which $tf$ and $g$ are not comparable with respect to $\preceq$, and we obtain a semi-metric by taking the multiplicative size of this interval:

\begin{definition}\label{def:Hilbert}
The \emph{Hilbert projective metric} $\Theta$ with respect to $\Lambda$ is defined as 
\begin{equation}\label{eqn:Theta}
\Theta(f,g)=\log\frac{B(f,g)}{A(f,g)}.
\end{equation}
\end{definition}

Note that $\Theta(\alpha f,\beta g) = \Theta(f,g)$ for all $\alpha,\beta>0$, and in particular $\Theta(f,g) = 0$ if and only if there exists $t>0$ such that $f = t g$. The following contraction theorem due to Birkhoff will be essential to our arguments in this paper. See \cite[Theorem 1.1]{cL95} or \cite[Proposition 12.3.6]{VO16} for recent presentations of the proof.

\begin{theorem}[Birkhoff Contraction Theorem \cite{gB57}]\label{thm:Birkhoff}
Let $\Omega_1,\Omega_2$ be Banach spaces, and $\Lambda_i\subset \Omega_i$ closed convex cones for $i=1,2$. Let $\Theta_i$ be the corresponding Hilbert projective metrics.
If a linear operator $L\colon \Omega_1\to \Omega_2$ has the property that 
$L(\Lambda_1)\subset \Lambda_2$, then for all $f,g\in \Lambda_1$, we have
\[
\Theta_{2}(Lf,Lg)\leq \tanh\Big(\frac{\Delta}{4}\Big)\Theta_{1}(f,g),
\] 
where $\Delta:=\diam_{2}(L \Lambda_1)=\sup\{\Theta_{2}(Lf,Lg) : f,g \in \Lambda_1\}$ and $\tanh\infty=1$.

In particular, if $\Delta<\infty$, then $L|_{\Lambda_1}$ is a contraction by $\tanh(\Delta/4) < 1$.
\end{theorem}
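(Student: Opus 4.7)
The plan is to follow Birkhoff's classical argument, which reduces the contraction inequality to optimizing a cross-ratio. First I dispose of the trivial cases: if $f$ and $g$ are positive scalar multiples of each other, both sides vanish; if $\Theta_1(f,g)=\infty$ the inequality is vacuous (using $\tanh\infty=1$); and if $\Delta=\infty$, the weak contraction $\Theta_2(Lf,Lg)\le \Theta_1(f,g)$ suffices, and this follows directly from the implication $af\preceq g\preceq bf \Rightarrow aLf\preceq Lg \preceq bLf$. So I assume $0<a:=A(f,g)<b:=B(f,g)<\infty$ and $\Delta<\infty$. Closedness of $\Lambda_1$ together with the exclusion of the scalar-multiple case gives $g-af,\, bf-g\in\Lambda_1$, and the hypothesis $L(\Lambda_1)\subset\Lambda_2$ then yields $u := L(g-af)\in\Lambda_2$ and $v := L(bf - g) \in \Lambda_2$.

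The key algebraic step is the identity $u+v=(b-a)Lf$ and $bu+av=(b-a)Lg$. Since $\Theta_2(u,v)\le\Delta$, there exist $\mu,\nu>0$ with $\mu u\preceq v\preceq\nu u$ and $\log(\nu/\mu)\le\Delta$. Writing $Lg - tLf = \frac{(b-t)u - (t-a)v}{b-a}$ and $tLf - Lg = \frac{(t-b)u + (t-a)v}{b-a}$ and combining with the preceding comparisons yields, after solving for the admissible values of $t$,
\[
A(Lf,Lg)\ge \frac{b+a\nu}{1+\nu},\qquad B(Lf,Lg)\le \frac{b+a\mu}{1+\mu},
\]
and hence
\[
\Theta_2(Lf,Lg) \le \log\frac{(b+a\mu)(1+\nu)}{(b+a\nu)(1+\mu)}.
\]

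The remaining step, and the main obstacle, is to bound the RHS by $\tanh(\Delta/4)\log(b/a)$ uniformly over admissible $\mu,\nu$. My plan is to optimize over $\mu>0$ with $\nu=e^D\mu$ fixed and $D\in[0,\Delta]$; a short calculus exercise locates the critical point at $\mu=\sqrt{b/a}\,e^{-D/2}$, and substituting there together with the identity $2+e^x+e^{-x}=4\cosh^2(x/2)$ collapses the expression to $\cosh^2((D+r)/4)/\cosh^2((D-r)/4)$ with $r:=\log(b/a)$. The addition formula $\cosh(a+b)/\cosh(a-b)=(1+\tanh a\tanh b)/(1-\tanh a\tanh b)$ then recasts the logarithm of this ratio as $4\operatorname{arctanh}(\tanh(D/4)\tanh(r/4))$, and concavity of $\tanh$ on $[0,\infty)$ (which gives $\tanh(xy)\ge x\tanh y$ for $x\in[0,1]$) yields $4\operatorname{arctanh}(\tanh(D/4)\tanh(r/4))\le r\tanh(D/4)\le r\tanh(\Delta/4)$, completing the argument. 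The difficulty here is not any single computation but assembling them in the right order and recognizing which hyperbolic identities produce the $\tanh$ contraction factor.
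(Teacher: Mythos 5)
The paper states Theorem \ref{thm:Birkhoff} without proof, deferring to \cite{gB57}, \cite{cL95}, and \cite{VO16}, so there is no internal proof to compare against. Your argument is correct and is essentially the standard Birkhoff argument appearing in those references: set $u=L(g-af)$, $v=L(bf-g)$, use the linear identities $u+v=(b-a)Lf$ and $bu+av=(b-a)Lg$ to transfer the bounds $\mu u\preceq v\preceq \nu u$ into bounds on $A(Lf,Lg)$ and $B(Lf,Lg)$, and then optimize the resulting cross-ratio. Your hyperbolic bookkeeping is correct: at the critical point $\mu^*=\sqrt{b/a}\,e^{-D/2}$ the ratio does collapse to $\cosh^2((r+D)/4)/\cosh^2((r-D)/4)$, the addition formula gives $4\operatorname{arctanh}(\tanh(r/4)\tanh(D/4))$, and the concavity inequality $\tanh(cy)\ge c\tanh y$ for $c\in[0,1]$, $y\ge 0$ delivers the factor $\tanh(\Delta/4)$. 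The only point you should tighten slightly is the justification that $\mu^*$ is the \emph{global} maximizer over $\mu\in(0,\infty)$ for fixed $D$; this is true because the expression tends to $0$ as $\mu\to 0^+$ and as $\mu\to\infty$ while being positive at $\mu^*$, but you invoke it as a ``short calculus exercise'' without noting the boundary behavior. With that minor addition, the proof is complete.
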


\subsection{Abstract cone conditions for an RPF theorem}\label{sec:abstract}

The following result applies to both one-sided and two-sided sequences, and will be proved in \S\ref{sec:get-cones}.

\begin{theorem}\label{thm:get-cones}
Let $\{(X_n,T_n,\ph_n)\}_n$ be a sequence satisfying \allA, and fix a constant $Q > \HC \confact^\Hexp(1-\confact^\Hexp)^{-1}$, where $\confact\in (0,1)$ is the contraction factor from \ref{A:exp} and $\HC,\Hexp$ are the H\"older parameters from \ref{A:Hol}.

Then the cones $\Lambda_n := \Lambda_n(Q) \subset C(X_n)$ defined in \eqref{eqn:cone}, and the RPF operators $L_n \colon C(X_n) \to C(X_{n+1})$ defined in \eqref{eqn:Ln}, satisfy the following conditions, taking $\TE \in \NN$ as in condition \ref{A:exact}
and writing $\Theta_n$ for the Hilbert projective metric on $\Lambda_n$:
\begin{enumerate}[label=\upshape{(C\arabic{*})},leftmargin=*]
\setcounter{enumi}{-1}
\item\label{C0}
Each $\Lambda_n-\Lambda_n$ is dense in $C(X_n)$ (in the uniform metric).
\item\label{C1} 
Each $\Lambda_n \subset C(X_n)$ is a closed convex cone 
such that $\one \in \Lambda_n \subset C_n^+$.
\item\label{C2} 
Each $L_n \colon C(X_n) \to C(X_{n+1})$ is a bounded linear operator with
$L_n\Lambda_n\subset\Lambda_{n+1}$, and there exists $\kappa\geq 1$
such that for all $n$, we have
\begin{gather*}
\kappa^{-1} \leq L_n\one(x)\leq \kappa
\text{ for all } x\in X_{n+1},
\text{ and} \\
\|L_n f\|_\infty \leq \kappa \|f\|_\infty
\text{ for all } f\in \Lambda_n.
\end{gather*}
Moreover, there exists $\Delta \in (0,\infty)$ and $\TE\in\NN$ such that for every $n$, we have
\[
\diam_{\Theta_{n+\TE}} \bL_n^\TE\Lambda_n \leq \Delta
\quad\text{and}\quad
\sup \bL_n^\TE\one \leq e^\Delta \inf \bL_n^\TE\one.
\]
\end{enumerate}
\end{theorem}

It turns out that Conditions \ref{C0}--\ref{C2} carry all the information we need about the dynamics in order to apply Birkhoff's contraction theorem and deduce the RPF theorem. In fact, we can weaken \ref{C0} slightly: the following result is proved in \S\ref{sec:thm1}.

\begin{theorem}\label{thm:gen-forward}
Consider a sequence of compact metric spaces $\{ X_n \}_{n\geq 0}$,
a sequence of bounded linear operators $\{L_n \colon C(X_n) \to C(X_{n+1}) \}_{n\geq 0}$,
and a family of closed convex cones
$\{ \Lambda_n \subset C_n^+ \subset C(X_n) \}_{n\geq 0}$ 
satisfying \ref{C1}, \ref{C2}, and the following:
\begin{enumerate}[label=\upshape{(C\arabic{*})},leftmargin=*]\setcounter{enumi}{2}
    \item\label{C3} 
Writing  $\Lambda_n^p := \{ f \in C_n^+ : \bL_n^p f \in \Lambda_{n+p} \}$ and $\Lambda_n^\infty := \bigcup_{p=0}^\infty \Lambda_n^p$, the set
$\Lambda_n^\infty - \Lambda_n^\infty$ is (uniformly) dense in $C(X_n)$ for every $n$.
\end{enumerate}
Then conclusions \ref{get-lm}--\ref{get-exp1} of Theorem \ref{thm:forward} hold for every $n\geq 0$.
\end{theorem}

Theorems \ref{thm:get-cones} and \ref{thm:gen-forward} together imply Theorem \ref{thm:forward}.
Similarly, Theorem \ref{thm:two-sided} is a consequence of Theorem \ref{thm:get-cones} and the following result, which is proved in \S\ref{sec:thm2}.

\begin{theorem}\label{thm:gen-two-sided}
Consider a sequence of compact metric spaces $\{ X_n \}_{n\in \ZZ}$,
a sequence of bounded linear operators $\{L_n \colon C(X_n) \to C(X_{n+1}) \}_{n\in \ZZ}$,
and a family of closed convex cones $\{ \Lambda_n \subset C_n^+ \subset C(X_n) \}_{n\in \ZZ}$ 
such that \ref{C1}, \ref{C2}, and \ref{C3} are satisfied.
Then conclusions \ref{get-lm}--\ref{get-exp2} of Theorems \ref{thm:forward} and \ref{thm:two-sided} hold for every $n\in \ZZ$.
\end{theorem}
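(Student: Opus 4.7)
My plan is to first dispatch conclusions \ref{get-lm}--\ref{get-exp1} by noting that they are purely forward in time: for each fixed $n\in\ZZ$, the sequence $\{(X_{n+k}, L_{n+k}, \Lambda_{n+k})\}_{k\geq 0}$ continues to satisfy \ref{C1}, \ref{C3}, \ref{C4}, so Theorem \ref{thm:gen-forward} applies and produces $\lambda_n, m_n$ together with the desired uniqueness and exponential rate. Because the constants in \ref{C3}--\ref{C4} are independent of the base point $n$, the constants $C_1, C_2, \edc$ in \ref{get-exp1} will be uniform in $n\in\ZZ$.

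For \ref{get-h}, the key point is to understand $g_{n,k} := \bL_{n-k}^k f_{n-k} / (\bl_{n-k}^k \langle f_{n-k}, m_{n-k}\rangle)$ in both the Hilbert and uniform topologies. Iterating the dual relation $L_j^* m_{j+1} = \lambda_j m_j$ from \ref{get-L*} gives $(\bL_{n-k}^k)^* m_n = \bl_{n-k}^k m_{n-k}$, and hence $\int g_{n,k}\,dm_n = 1$ for every $k$. Next I apply Birkhoff's contraction theorem (Theorem \ref{thm:Birkhoff}) to the factorization $\bL_{n-k}^k = \bL_{n-k'}^{k'} \circ \bL_{n-k}^{k-k'}$: for $k' \geq \TE$ the image $\bL_{n-k'}^{k'} \Lambda_{n-k'}$ has $\Theta_n$-diameter at most $\Delta$ by \ref{C3}, and each additional block of $\TE$ iterations contracts by $\theta := \tanh(\Delta/4) < 1$. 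This shows $\Theta_n(g_{n,k}, g_{n,k'}) \leq \Delta\, \theta^{\lfloor k'/\TE\rfloor - 1}$ for any admissible choices of $f_{n-k}, f_{n-k'}$, so the sequence is Cauchy in the Hilbert metric and its limit (if it exists uniformly) is independent of the choice of $f_{n-k}$.

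To upgrade this to uniform convergence, I will use the standard lemma (cf.\ \cite{cL95}) that if $u, v$ lie in a closed convex cone $\Lambda \subset C^+$ with $\Theta(u,v) \leq t < \infty$, then $A u \leq v \leq B u$ pointwise for some $0 < A \leq B$ with $B/A \leq e^t$; normalizing by $\int\cdot\,dm_n = 1$ then gives $\|u - v\|_\infty \leq (e^t - 1)\sup_\Lambda \{\|w\|_\infty : \int w\,dm_n = 1\}$, and this supremum is finite because \ref{C3} (applied to $\bL_{n-\TE}^\TE$) forces every $w\in\Lambda_n$ arising as a $\bL_{n-\TE}^\TE$-image to satisfy $\sup w \leq e^\Delta \inf w$, so in particular any $m_n$-normalized $w$ has $\|w\|_\infty$ bounded. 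This gives \ref{get-h} and, reading the estimate more quantitatively, the exponential rate \ref{get-exp2} with $C_3$ and $\edc$ uniform in $n$.

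The remaining conclusions are then short. For \ref{get-Lh}: $h_n\in\Lambda_n$ since each cone is closed and each $g_{n,k}$ eventually lies in $\Lambda_n$; strict positivity follows from $\Theta_n(h_n, \one) \leq \Delta$ (giving $h_n \geq e^{-\Delta} \sup h_n > 0$); the integral equals $1$ by uniform convergence; and the eigenrelation follows from the identity $L_n g_{n,k} = \lambda_n g_{n+1,k+1}$ (with the same underlying $f_{n-k}$) upon taking $k\to\infty$. Uniqueness \ref{get-h!} is the slickest step: given any candidate sequence $(h_n')$, the relation $L_n h_n' = c_n h_{n+1}'$ forces $c_n = \lambda_n$ (integrate against $m_{n+1}$ and use \ref{get-L*}), and then plugging $f_{n-k} = h_{n-k}'$ into the defining limit gives $g_{n,k} \equiv h_n'$, hence $h_n = h_n'$. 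The main technical obstacle, as already indicated, is the Hilbert-to-uniform conversion in the presence of the measure-based normalization $\int g_{n,k}\,dm_n = 1$; everything else is bookkeeping around Birkhoff contraction and the already-established relation between $m_n$ and $L_n^*$.
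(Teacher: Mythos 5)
Your proposal is correct and follows essentially the same route as the paper: apply Theorem~\ref{thm:gen-forward} to each forward tail for \ref{get-lm}--\ref{get-exp1}, normalize so that $\int g_{n,k}\,dm_n = 1$ using $L_n^* m_{n+1} = \lambda_n m_n$, run Birkhoff contraction over blocks of length $\TE$, then convert the Hilbert estimate to a sup-norm estimate and read off \ref{get-h}--\ref{get-exp2}. The one quantitative slip is in the step where you bound $\sup w / \inf w$ (and likewise $h_n \geq e^{-\Delta}\sup h_n$) by $e^\Delta$: condition \ref{C3} only controls $\sup / \inf$ for $\bL_{n-\TE}^\TE\one$, not for every $w$ in the image, so you need the triangle inequality $\Theta_+(w,\one) \leq \Theta_+(w, \bL_{n-\TE}^\TE\one) + \Theta_+(\bL_{n-\TE}^\TE\one, \one) \leq 2\Delta$, giving $e^{2\Delta}$ in place of $e^\Delta$; this changes the constant $C_3$ but not the argument.
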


We emphasize that Theorems \ref{thm:gen-forward} and \ref{thm:gen-two-sided} do not assume that the operators $L_n$ come from any specific dynamics $T_n$ or potentials $\ph_n$. In particular, one can apply these theorems beyond the uniformly expanding setting: for example, in the nonuniformly expanding skew products studied previously by the second author, any sequence of fiber transformations over a trajectory of the base dynamics satisfies \ref{C1}--\ref{C3} \cite[Lemma 3.4]{gH23}, and thus Theorem \ref{thm:gen-forward} applies.

We conclude with a H\"older continuity result along the lines of Theorem \ref{thm:HolDep-intro}.
Fix a sequence $\bX = \{(X_n,d_n)\}_{n\in\ZZ}$ of compact metric spaces,
and a sequence of closed convex cones $\Lambda_n \subset C_n^+ \subset C(X_n)$.
Consider for each $n$ the following norm on the set of linear operators $L \colon C(X_n) \to C(X_{n+1})$:
\begin{equation}\label{eqn:norm-Lambda}
\|L\|_{\Lambda_n} := \sup \big\{ \|Lf\|_\infty : f\in \Lambda_n,\ \|f\|_\infty = 1 \big\}.
\end{equation}
Writing $\bLam := \{\Lambda_n\}_n$ for the sequence of cones, let $\LL = \LL(\bX,\bLam)$ denote the space of sequences $\bL = \{L_n \colon C(X_n) \to C(X_{n+1}) \}_{n\in \ZZ}$ of linear operators that are bounded in these norms.
Then the following defines a $[0,\infty]$-valued metric on $\LL$:
\begin{equation}\label{eqn:dL}
d_\LL(\bL,\bL') := \sum_{n\in \ZZ} 2^{-|n|} \|L_n - L_n'\|_{\Lambda_n}.
\end{equation}
Now fix parameters $\kappa\geq 1$, $\Delta>0$, and $\tau\in \NN$, and let $\LL_0(\kappa,\Delta,\tau) \subset \LL$ denote the set of $\bL$ for which
\ref{C1}--\ref{C3} are satisfied
with this family of cones and with these values of $\kappa,\Delta,\tau$. Define $\LL^+$, $d_{\LL^+}$, and $\LL_0^+$ similarly using forward sequences $\{L_n\}_{n\geq 0}$.

\begin{theorem}\label{thm:continuity}
The sets $\LL_0$ and $\LL_0^+$
have the following continuity results with respect to $d_\LL$ and $d_{\LL^+}$.
\begin{enumerate}[label=\textup{(\alph{*})},leftmargin=*]
\item The map $\LL_0^+ \to (0,\infty)$ taking $\bL \mapsto \lambda_0$ is locally H\"older continuous.
\item For all $f \in \Lambda_0$,
the map $\LL_0^+ \to \RR$ taking $\bL \mapsto \int f \,dm_0$ is locally H\"older continuous.
\item The map $\LL_0 \to C(X_0)$ taking $\bL \mapsto h_0$ is locally H\"older continuous with respect to the uniform norm on $C(X_0)$.
\end{enumerate}
By shifting the sequences, it follows that $(\lambda_n,m_n,h_n)$ depend locally H\"older continuously on $\bL$ for every $n$ (with H\"older constants depending on $n$).
\end{theorem}

\section{Deducing cone conditions}\label{sec:get-cones}

This section contains the proof of Theorem \ref{thm:get-cones}. 
Condition \ref{C1} is immediate from \eqref{eqn:cone}.
For Condition \ref{C0}, we prove the following.

\begin{lemma}\label{lem:get-C3}
For every $Q>0$ and $n\in \ZZ$, the set $\Lambda_n(Q) - \Lambda_n(Q)$ contains all $\Hexp$-H\"older functions on $X_n$. In particular, this set is dense in $C(X_n)$.
\end{lemma}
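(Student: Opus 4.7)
The strategy is the classical ``add a large constant'' trick: given a $\beta$-H\"older function $\varphi \colon X_n \to \RR$ with H\"older constant $K$ (on scales $\leq \delta$), I want to write $\varphi = f - g$ with $f,g \in \Lambda_n(Q)$, using $g$ a constant and $f = \varphi + C$ for $C$ large.

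First I would check that the constant function $C\one$ (for any $C>0$) lies in $\Lambda_n(Q)$, which is immediate since the defining inequality $C \leq e^{Q d_n(x,x')^\beta} C$ is trivial. Then the bulk of the work is to show that $\varphi + C \in \Lambda_n(Q)$ when $C$ is chosen large enough. Positivity requires $C > -\inf\varphi$. For the H\"older-type condition in \eqref{eqn:cone}, I would fix $x,x'$ with $d_n(x,x')\leq \delta$ and estimate
\[
(\varphi+C)(x) - (\varphi+C)(x') \leq K d_n(x,x')^\beta
\quad\text{versus}\quad
(e^{Q d_n(x,x')^\beta} - 1)(\varphi+C)(x') \geq Q d_n(x,x')^\beta (\varphi+C)(x'),
\]
using the elementary inequality $e^t - 1 \geq t$ for $t \geq 0$. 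Thus it suffices to take $C$ with $\inf(\varphi + C) \geq K/Q$, and then the desired inequality $(\varphi+C)(x) \leq e^{Q d_n(x,x')^\beta}(\varphi+C)(x')$ follows. Setting $g = C\one$ and $f = \varphi + C$ gives $\varphi = f - g \in \Lambda_n(Q) - \Lambda_n(Q)$.

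For the density assertion, I would reduce to the previous step by showing $\beta$-H\"older functions are uniformly dense in $C(X_n)$. This is standard: Lipschitz functions form a unital subalgebra of $C(X_n)$ that separates points, so they are dense by Stone--Weierstrass, and on a compact metric space every Lipschitz function is $\beta$-H\"older (on scale $\delta$, using boundedness of $d_n(x,x')^{1-\beta}$).

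The plan involves no real obstacle; the only subtle point is ensuring that the $Q$ in the cone definition absorbs the H\"older constant of $\varphi$, which is the reason we need to enlarge $C$ to make $\inf(\varphi+C)$ large. This is why the statement holds for \emph{every} $Q>0$ rather than only for $Q$ satisfying the threshold in Theorem~\ref{thm:get-cones}: the cone is wider when $Q$ is larger, but even small $Q$ suffices once $C$ is chosen appropriately.
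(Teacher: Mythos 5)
Your argument is correct and is essentially the paper's proof: decompose $\varphi = (\varphi + C) - C\one$ with $C$ large enough that $\inf(\varphi+C) \geq K/Q$, which is exactly the threshold the paper extracts via Lemma~\ref{lem:log-norm}, i.e.\ from the bound $|\log(\varphi+C)|_\Hexp \leq |\varphi|_\Hexp(C+\inf\varphi)^{-1} \leq Q$. The only cosmetic difference is that you verify the cone inequality directly using $e^t-1 \geq t$ rather than passing through the log-H\"older seminorm, but that is the dual form of the paper's elementary estimate $\log s - \log r \leq (s-r)/a$, so the two proofs coincide.
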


We prove Lemma \ref{lem:get-C3} via a general fact about H\"older continuous functions. Recall that given a compact metric space $X$, a function $f\colon X\to \RR$ is $\Hexp$-H\"older continuous function if its H\"older seminorm is finite; i.e. 
\[
|f|_\Hexp := \sup \bigg \{ \frac{|f(x) - f(y)|}{d(x,y)^\Hexp} : x,y\in X,\ x\neq y \bigg\}<\infty.
\]

\begin{lemma}\label{lem:log-norm}
Given a compact metric space $X$ and a $\Hexp$-H\"older continuous function $f\colon X\to \RR$, for every $c>-\inf f$, we have
\begin{equation}\label{eqn:log-norm}
|\log(f+c)|_\Hexp \leq |f|_\Hexp (c+\inf f)^{-1}.
\end{equation}
\end{lemma}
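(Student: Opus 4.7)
The plan is to bound the difference $\log(f(x)+c) - \log(f(y)+c)$ pointwise and then divide by $d(x,y)^\beta$ before taking the supremum. The key observation is that since $c > -\inf f$, the function $f+c$ is strictly positive on $X$, with $f+c \geq c + \inf f > 0$ everywhere, so $\log(f+c)$ is well-defined and continuous.

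For any $x,y \in X$ with $x \neq y$, I would apply the mean value theorem to the function $\log$ on the interval between $f(x)+c$ and $f(y)+c$, both of which lie in $[c+\inf f, \infty)$. This yields some $\xi$ in that interval with
\[
\log(f(x)+c) - \log(f(y)+c) = \frac{1}{\xi}\bigl(f(x) - f(y)\bigr).
\]
Since $\xi \geq c + \inf f > 0$, we have $1/\xi \leq (c+\inf f)^{-1}$. Combining this with the Hölder bound $|f(x)-f(y)| \leq |f|_\beta\, d(x,y)^\beta$ gives
\[
\frac{|\log(f(x)+c) - \log(f(y)+c)|}{d(x,y)^\beta} \leq \frac{|f|_\beta}{c+\inf f},
\]
and taking the supremum over $x \neq y$ yields \eqref{eqn:log-norm}.

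There is no real obstacle here; the argument is essentially just the observation that $\log$ is Lipschitz on $[c+\inf f,\infty)$ with Lipschitz constant $(c+\inf f)^{-1}$, composed with the Hölder control on $f$. One minor point to address is the degenerate case where $f$ is constant (so $|f|_\beta = 0$), but then $\log(f+c)$ is also constant and both sides of \eqref{eqn:log-norm} vanish. Everything else is a direct calculation.
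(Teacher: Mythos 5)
Your proof is correct and takes essentially the same approach as the paper: both arguments reduce to the observation that $\log$ is Lipschitz with constant $(c+\inf f)^{-1}$ on $[c+\inf f,\infty)$. The only cosmetic difference is that you invoke the mean value theorem while the paper writes $\log s - \log r = \int_r^s t^{-1}\,dt \leq (s-r)/a$, which are interchangeable ways of getting the same Lipschitz bound.
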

\begin{proof}
First observe that given any $s > r \geq a > 0$, we have
\begin{equation}\label{eqn:logrs}
\log s - \log r = \int_r^s \frac 1t \,dt \leq \frac{s-r}{a}.
\end{equation}
Writing $f_c := f+c$, we see that $f_c(x) \geq c + \inf f > 0$ for every $x\in X$. Given any $x,y\in X$ with $x\neq y$, we can use \eqref{eqn:logrs} with $a = c + \inf f$ to conclude that
\[
|\log f_c(x) - \log f_c(y)|
\leq \frac {|f_c(x) - f_c(y)|}{c+\inf f}
= \frac{|f(x) - f(y)|}{c+\inf f} \leq \frac{|f|_\Hexp d(x,y)^\Hexp}{c+\inf f}.\qedhere
\]
\end{proof}

\begin{proof}[Proof of Lemma \ref{lem:get-C3}]
Observe that if $g \colon X_n \to (0,\infty)$ is a $\Hexp$-H\"older continuous function with $|\log g|_\Hexp \leq Q$, then for all $x,y\in X$ with $d(x,y) \leq \delta$, we have $\log g(y) \leq \log g(x) + Qd(x,y)^\Hexp$, so $g \in \Lambda_n(Q)$.

Now given any $\beta$-H\"older continuous function $f\colon X_n\to \RR$, let $c>\max(0,-\inf f)$ be sufficiently large that $|f|_\Hexp (c+\inf f)^{-1} \leq Q$. By Lemma \ref{lem:log-norm}, this implies that $|\log(f+c)|_\Hexp \leq Q$, and the previous paragraph implies that $f+c \in \Lambda_n(Q)$. Since $\Lambda_n(Q)$ contains all positive constant functions, we have $c \in \Lambda_n(Q)$, and thus $f = (f+c) - c \in \Lambda_n(Q) - \Lambda_n(Q)$.

This proves that the set $\Lambda_n(Q) - \Lambda_n(Q)$ contains all $\Hexp$-H\"older continuous functions on $X_n$, and since every function in $C(X_n)$ can be uniformly approximated by such functions, we are done.
\end{proof}

Now we turn our attention to \ref{C2}. For this, we use a series of lemmas that follow the usual approach to proving finite Hilbert diameter for the image of a log-H\"older cone under the RPF operator associated to an expanding map and a H\"older potential; see \cite{cL95,fN04,VO16} for these arguments in the stationary setting. 
The first lemma relies on the expanding and locally  onto properties in \ref{A:exp}.

\begin{lemma}\label{lem:Q-kappa}
Given $Q>0$, let $S = S(Q) := \confact^\Hexp(\HC+Q)$. Then 
$L_n \Lambda_n(Q) \subset \Lambda_{n+1} (S)$ for every $n$. Moreover, for every $x\in X_{n+1}$, we have
\begin{equation}\label{eqn:Ln1}
e^{-V} \leq L_n\one(x) \leq D e^V.
\end{equation}
\end{lemma}

\begin{proof}
Let $\delta>0$ be as in \ref{A:exp}. Given $f\in \Lambda_n(Q)$, fix $x,x' \in X_{n+1}$ such that $d_{n+1}(x,x') \leq \delta$. We will prove that $L_nf(x) \leq e^{Sd_{n+1}(x,x')^\Hexp} L_nf(x')$.

Given any $y\in T_n^{-1}(x)$, by \ref{A:exp}, there exists $y' \in T_n^{-1}(x') \cap B(y,\delta)$ such that $d_n(y,y') \leq \confact d_{n+1}(x,x')$. Together with the H\"older estimates in \ref{A:Hol} and the definition of $\Lambda_n(Q)$ in \eqref{eqn:cone}, this implies that
\begin{align*}
\ph_n(y) &\leq \ph_n(y') + \HC d_n(y,y')^\Hexp
\leq \ph_n(y') + \HC \confact^{\Hexp} d_{n+1}(x,x')^\Hexp, \\
\log f(y) &\leq \log f(y') + Q d_n(y,y')^\Hexp
\leq \log f(y') + Q \confact^{\Hexp} d_{n+1}(x,x')^\Hexp.
\end{align*}
Using these inequalities in the definition of $L_n$, we get
\begin{align*}
L_nf(x) = \sum_{y\in T_n^{-1}x} e^{\ph_n(y)} f(y)
&\leq \sum_{y' \in T_n^{-1}x'} e^{\ph_n(y')+ \HC \confact^{\Hexp} d_{n+1}(x,x')^\Hexp} f(y') e^{Q \confact^{\Hexp} d_{n+1}(x,x')^\Hexp} \\
&= L_nf(x') e^{\confact^{\Hexp}(\HC+Q) d_{n+1}(x,x')^\Hexp}.
\end{align*}
We conclude that $L_n f \in \Lambda_{n+1}(\confact^{\Hexp}(\HC+Q))$.

To prove \eqref{eqn:Ln1}, it suffices to use the definition $L_n\one(x) = \sum_{y\in T_n^{-1}(x)} e^{\ph_n(y)}$ together with the observation that \ref{A:pre} gives $1\leq \#T_n^{-1}(x) \leq D$, and by \ref{A:Hol}, each $y\in T_n^{-1}(x)$ must satisfy $-V \leq \ph_n(y) \leq V$.
\end{proof}

With $S=S(Q)$ as in Lemma \ref{lem:Q-kappa}, a simple computation shows that 
\begin{equation}\label{eqn:S<Q}
\text{$S < Q$ if and only if $Q > \HC \confact^\Hexp(1-\confact^\Hexp)^{-1}$}.
\end{equation}
This motivates our choice of $Q$ in Theorem \ref{thm:get-cones}.

Given $R>0$, define for each $n$ the cone
\begin{equation}\label{eqn:CR}
C_n^+(R) := \{ f\in C_n^+ : \sup f \leq R \inf f \}.
\end{equation}
We will see in \S\ref{sec:gen-cones} that this is the ball of projective radius $R$ around $\one$ in $C_n^+$. The next lemma relies on the topological exactness property in \ref{A:exact}, as well as the uniform bounds on number of preimages in \ref{A:pre}, and on the range of $\ph_n$ in \ref{A:Hol}.

\begin{lemma}\label{lem:+R}
Given $Q>0$, let $R = R(Q) := D^\TE e^{\TE V} e^{Q\delta^\Hexp}$, where $D,\TE,V,\delta,\Hexp$ are as in \allA. Then for every $n$, we have
$\bL_n^\TE \Lambda_n(Q) \subset C_{n+\TE}^+(R)$.
\end{lemma}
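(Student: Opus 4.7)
The plan is to show that for any $f \in \Lambda_n(Q)$, the function $g := \bL_n^\TE f$ satisfies $g(x) \leq R\, g(x')$ for every pair $x,x' \in X_{n+\TE}$; taking supremum over $x$ and infimum over $x'$ then yields $\sup g \leq R \inf g$, which is the defining inequality for membership in $C_{n+\TE}^+(R)$.

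To produce the pointwise bound, fix $x,x' \in X_{n+\TE}$ and, for each preimage $y \in (\bT_n^\TE)^{-1}(x)$, the key use of \ref{A:exact} is that $\bT_n^\TE(B_{d_n}(y,\delta)) = X_{n+\TE}$, so one may select $y' \in (\bT_n^\TE)^{-1}(x') \cap B_{d_n}(y,\delta)$. Three uniform estimates then combine. First, the oscillation bound in \ref{A:Hol} yields $|\bph_n^\TE(y) - \bph_n^\TE(y')| \leq \TE V$ by a crude termwise bound over the $\TE$ summands (no H\"older control along the iterated orbits is needed). Second, $f \in \Lambda_n(Q)$ together with $d_n(y,y') \leq \delta$ gives $f(y) \leq e^{Q\delta^\Hexp} f(y')$. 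Third, $e^{\bph_n^\TE(y')} f(y') \leq g(x')$, since the other preimage terms contributing to $g(x')$ are nonnegative. Multiplying the first two bounds and summing over the at most $D^\TE$ preimages of $x$ (from \ref{A:pre}) produces $g(x) \leq D^\TE e^{\TE V} e^{Q \delta^\Hexp} g(x') = R\, g(x')$.

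To finish, I check that $g \in C_{n+\TE}^+$, i.e.\ that $g \not\equiv 0$. Since $f \in C_n^+$ is not identically zero, there is some $y_0$ with $f(y_0) > 0$, so the $y_0$-term in the sum defining $g(\bT_n^\TE y_0)$ is strictly positive, hence $g(\bT_n^\TE y_0) > 0$; the pointwise inequality $g(x) \leq R g(x')$ then forces $g > 0$ everywhere. The main point to get right is invoking \ref{A:exact} to locate the preimage $y'$ near $y$ in $X_n$, rather than attempting to pull back from $x'$ to $x$ via \ref{A:exp} (which would require $d_{n+\TE}(x,x') \leq \delta$ and would introduce an unnecessary contraction factor); beyond this observation, the argument is a direct calculation with no serious obstacle.
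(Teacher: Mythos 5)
Your proof is correct, and it reaches exactly the same constant $R = D^\TE e^{\TE V} e^{Q\delta^\Hexp}$ as the paper, but it organizes the estimate a bit differently. The paper first fixes a global maximizer $y_0$ of $f$ and derives \emph{separate} bounds: a lower bound $\bL_n^\TE f(x) \geq e^{a_n}e^{-Q\delta^\Hexp}\sup f$ (applying \ref{A:exact} once, centered at $y_0$, to pick a single preimage of $x$ near $y_0$), and an upper bound $\bL_n^\TE f(x') \leq D^\TE e^{a_n+\TE V}\sup f$ (crude counting over all preimages of $x'$, with the constant $a_n := \inf\bph_n^\TE$ cancelling in the ratio); combining gives $\sup\bL_n^\TE f / \inf\bL_n^\TE f \leq R$. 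You instead compare $g(x)$ and $g(x')$ \emph{directly} for each pair: you apply \ref{A:exact} at every preimage $y$ of $x$ to manufacture a nearby preimage $y'$ of $x'$, then match term against term using the oscillation bound $|\bph_n^\TE(y)-\bph_n^\TE(y')|\leq\TE V$, the cone condition $f(y)\leq e^{Q\delta^\Hexp}f(y')$, and the trivial bound $e^{\bph_n^\TE(y')}f(y')\leq g(x')$, finally summing over the at most $D^\TE$ preimages of $x$. Both routes use precisely the same three hypotheses \ref{A:pre}, \ref{A:exact}, \ref{A:Hol} together with the cone membership of $f$, and your closing remark is exactly right: \ref{A:exp} itself plays no role here, and one must invoke \ref{A:exact} rather than trying to pull back preimages from $x'$ to $x$, since $x$ and $x'$ may be far apart. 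Your final observation that the pointwise inequality $g(x)\leq Rg(x')$ forces $g>0$ everywhere (given $g\not\equiv 0$) is also a clean way to certify $g\in C_{n+\TE}^+$, slightly more explicit than the paper's treatment.
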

\begin{proof}
By \ref{A:Hol}, there exists $V\in (0,\infty)$ such that $\sup \ph_n - \inf \ph_n \leq V$ for all $n$, and thus $\sup \bph_n^k - \inf \bph_n^k \leq kV$ for all $n,k$.
Fix $n\in \ZZ$ and let 
$a_n = \inf \bph_n^\TE$,
so $\bph_n^\TE(y) \in [a_n, a_n + \TE V]$ for all $y\in X_n$.

Given $f\in \Lambda_n(Q)$, let $y_0 \in X_n$ be such that $f(y_0) = \sup f$. Note that this value is positive since $f\not\equiv 0$. By the definition of $\Lambda_n(Q)$, we have
\[
f(y) \geq e^{-Q \delta^\Hexp} f(y_0)
\quad\text{for all } y\in B_{d_n}(y_0,\delta).
\]
Given $x\in X_{n+\TE}$, condition \ref{A:exact} implies that there exists $y\in B_{d_n}(y_0,\delta) \cap (\bT_n^\TE)^{-1}(x)$,
and thus
\begin{equation}\label{eqn:Lfx}
\bL_n^\TE f(x) \geq e^{\bph_n^\TE(y)} f(y) \geq e^{a_n} e^{-Q \delta^\Hexp} \sup f.
\end{equation}
As in \ref{A:pre}, let $D\in \NN$ be an upper bound on the number of preimages of any point under any $T_n$. Then given any $x' \in X_{n+\TE}$, we have $\#(\bT_n^\TE)^{-1}(x') \leq D^\TE$, so
\begin{equation}\label{eqn:Lfx'}
\bL_n^\TE f(x') 
= \sum_{y \in (\bT_n^\TE)^{-1}(x')} e^{\bph_n^\TE(y)} f(y)
\leq D^\TE e^{a_n + \TE V} \sup f.
\end{equation}
Combining \eqref{eqn:Lfx'} and \eqref{eqn:Lfx} gives
\[
\frac{\sup \bL_n^\TE f}{\inf \bL_n^\TE f}
\leq D^\TE e^{\TE V} e^{Q \delta^\Hexp}.\qedhere.
\]
\end{proof}

Combining the previous two lemmas and \eqref{eqn:S<Q}, we see that 
under the hypotheses of Theorem \ref{thm:get-cones}, we have 
$\bL_n^\TE \Lambda_n(Q) \subset \Lambda_{n+\tau}(S) \cap C_{n+\tau}^+(R)$, where $S=S(Q) < Q$ and $R = R(Q)$ take the values stated in the lemmas. Then \ref{C2} follows from the next lemma.

\begin{lemma}\label{lem:Delta}
Given any $Q > S > 0$, any $R>0$, let $\Delta := 2\log(\frac{Q+S}{Q-S} \cdot R)$.
For every $f,g\in \Lambda_n(S) \cap C_n^+(R)$, we have $\Theta_n(f,g) \leq \Delta$.
\end{lemma}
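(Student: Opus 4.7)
\textbf{Proof proposal for Lemma \ref{lem:Delta}.}

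My plan is to unpack the definitions of $A(f,g)$ and $B(f,g)$ from \eqref{eqn:AB} in terms of the cone $\Lambda_n(Q)$, bound them using the stronger Hölder control afforded by $f,g \in \Lambda_n(S)$, and finish off using the comparability built into $C_n^+(R)$. First I would observe that $tf - g \in \Lambda_n(Q) \cup \{0\}$ is equivalent to the two conditions: (i) \emph{positivity}, $t \geq g(x)/f(x)$ for every $x$, giving $t \geq \sup(g/f)$; and (ii) the \emph{Hölder-cone} bound $(tf - g)(x) \leq e^{Qd^\Hexp}(tf - g)(x')$ for every pair with $d := d_n(x,x') \leq \delta$. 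Rearranging (ii) for $d>0$ and using $f \in \Lambda_n(S)$ (which gives $e^{Qd^\Hexp}f(x') - f(x) \geq (e^{Qd^\Hexp} - e^{Sd^\Hexp})f(x') > 0$) turns (ii) into
\[
t \;\geq\; \frac{e^{Qd^\Hexp} g(x') - g(x)}{e^{Qd^\Hexp} f(x') - f(x)}.
\]
An entirely symmetric calculation shows that $g - tf \in \Lambda_n(Q) \cup \{0\}$ is equivalent to $t \leq \inf(g/f)$ together with the reverse inequality above.

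Next I would bound the right-hand ratio pointwise. Setting $r = g(x)/g(x'), s = f(x)/f(x') \in [e^{-Sd^\Hexp}, e^{Sd^\Hexp}]$ and $\alpha = e^{Qd^\Hexp}$, the ratio equals $\frac{g(x')}{f(x')} \cdot \frac{\alpha - r}{\alpha - s}$. Maximizing over the allowed $r,s$ gives the upper bound
\[
\frac{g(x')}{f(x')} \cdot \frac{e^{(Q+S)d^\Hexp} - 1}{e^{(Q+S)d^\Hexp} - e^{2Sd^\Hexp}},
\]
and minimizing gives the reciprocal expression multiplied by $g(x')/f(x')$. The key elementary inequality is that $(e^x - 1)/x$ is increasing for $x > 0$, which when applied to $a = (Q+S)d^\Hexp > b = 2Sd^\Hexp$ yields
\[
\frac{e^a - 1}{e^a - e^b} \;\leq\; \frac{a}{a - b} \;=\; \frac{Q+S}{Q-S}
\]
and the reciprocal bound $\frac{Q-S}{Q+S}$ for the minimum. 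Consequently, taking suprema/infima over $x'$ yields $B(f,g) \leq \frac{Q+S}{Q-S}\sup(g/f)$ and $A(f,g) \geq \frac{Q-S}{Q+S}\inf(g/f)$, where the positivity bounds $\sup(g/f)$ and $\inf(g/f)$ are absorbed since $(Q+S)/(Q-S) > 1$.

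Finally, the $C_n^+(R)$ hypothesis gives $\sup(g/f) / \inf(g/f) \leq \frac{\sup g \cdot \sup f}{\inf g \cdot \inf f} \leq R^2$, so
\[
\Theta_n(f,g) \;=\; \log\frac{B(f,g)}{A(f,g)} \;\leq\; 2\log\!\Big(\frac{Q+S}{Q-S}\Big) + 2\log R \;=\; \Delta.
\]
The main technical hurdle is the elementary monotonicity lemma for $(e^x-1)/x$ combined with correctly tracking which of $r,s$ should be extremized in which direction to get the \emph{upper} bound (and conversely for the lower bound); once that parametrization is set up, everything collapses to a single inequality. The edge case $d = 0$ makes (ii) vacuous, and the case where $tf \equiv g$ makes $A = B$ with $\Theta_n(f,g) = 0 \leq \Delta$, so no extra care is needed there.
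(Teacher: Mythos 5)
Your proof is correct and follows essentially the same route as the paper's: both characterize membership in $\Lambda_n(Q)$ via the pairwise linear constraints, both reduce the comparison to the ratio $M(r)=\frac{e^{Qr}-e^{-Sr}}{e^{Qr}-e^{Sr}}$ bounded by $M(0)=\frac{Q+S}{Q-S}$, and both finish with the $R^2$ bound from $C_n^+(R)$. The one small difference is that you derive the bound $M(r)\le\frac{Q+S}{Q-S}$ directly from the elementary fact that $(e^x-1)/x$ is increasing, whereas the paper merely asserts that ``routine differentiation'' shows $M$ is decreasing; your version is arguably the cleaner way to record that step.
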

\begin{proof}
Recalling Definition \ref{def:Hilbert}, we need to determine for which values of $t\in (0,\infty)$ we have $g - tf \in \Lambda_n(Q)$ or $tf-g \in \Lambda_n(Q)$. Membership in $\Lambda_n(Q)$ is determined in terms of the following set of pairs of nearby points:
\[
\mathcal{P} := \{ (x,y) : x,y\in X_n,\ y\neq x,\ d_n(x,y) \leq \delta \}.
\]
Given $(x,y) \in \mathcal{P}$, define a linear functional $\ell_{x,y} \colon C(X_n) \to \RR$ by
\[
\ell_{x,y}(f) = e^{Qd_n(x,y)^\Hexp} f(x) - f(y),
\]
and observe that the definition of $\Lambda_n(Q)$ in \eqref{eqn:cone} is equivalent to
\[
\Lambda_n(Q) := \{ f \in C(X_n)\setminus\{0\} : f\geq 0 \text{ and }
\ell_{x,y}(f) \geq 0 \text{ for all } (x,y) \in \mathcal{P} \}.
\]
Since the functions $\ell_{x,y}$ are linear, we have
$\ell_{x,y}(g-tf) = \ell_{x,y}(g) - t\ell_{x,y}(f)$ for all $t$, so $g-tf\in \Lambda_n(Q)$ if and only if
\[
\ell_{x,y}(g) \geq t\ell_{x,y}(f) \text{ for all } (x,y)\in \mathcal{P},
\text{ and } g(x) \geq tf(x) \text{ for all } x\in X_n.
\]
{It follows that $A(f,g)$ is the infimum of the set
\[
\bigg\{ \frac{\ell_{x,y}(g)}{\ell_{x,y}(f)} : (x,y) \in \mathcal{P} \bigg\}
\cup
\bigg\{ \frac{g(x)}{f(x)} : x\in X_n \bigg\},
\]
and a similar argument shows that $B(f,g)$ is the supremum of this set.} Thus Definition \ref{def:Hilbert} gives
\[
\Theta_n(f,g) = \log \sup \bigg\{
\frac{\ell_{x,y}(g)}{\ell_{x,y}(f)}
\frac{\ell_{x',y'}(f)}{\ell_{x',y'}(g)} : (x,y), (x',y') \in \mathcal{P} \bigg\}
\cup \bigg\{ \frac{g(x) f(y)}{f(x) g(y)} : x,y \in X_n \bigg\}.
\]
Given $(x,y) \in \mathcal{P}$, let $r := d_n(x,y)^\Hexp \in [0,\delta^\Hexp]$, and observe that since $g\in \Lambda_n(S)$, we have $g(y) \geq e^{-Sr} g(x)$, leading to
\[
\ell_{x,y}(g) = e^{Qr}g(x) - g(y) \leq g(x) \big( e^{Qr} - e^{-Sr} \big).
\]
Similarly, $f(y) \leq e^{Sr} f(x)$, so
$\ell_{x,y}(f) \geq f(x)(e^{Qr} - e^{Sr})$, giving
\[
\frac{\ell_{x,y}(g)}{\ell_{x,y}(f)}
\leq \frac{g(x) (e^{Qr} - e^{-Sr})}{f(x) (e^{Qr} - e^{Sr})}
= M(r) \frac{g(x)}{f(x)},
\quad\text{where }
M(r) := \frac{e^{Qr} - e^{-Sr}}{e^{Qr} - e^{Sr}}.
\]
Some routine differentiation
shows that $M(r)$ is decreasing in $r$, and that $M(0) := \lim_{r\to 0^+} M(r) = \frac{Q+S}{Q-S}$. Using this fact, together with a similar inequality when the roles of $f,g$ are reversed, shows that
\[
\Theta_n(f,g) \leq \log \Big( M(0)^2 \sup_{x,y\in X_n} \frac{g(x) f(y)}{f(x) g(y)} \Big)
\leq 2 \log \Big( \frac{Q+S}{Q-S} \cdot R \Big).\qedhere
\]
\end{proof}


\section{The cone of nonnegative functions}\label{sec:gen-cones}

In this section, we collect some general facts about convex cones of functions that will be used in the proofs in \S\ref{sec:thm1} and \S\ref{sec:thm2}. Most of these concern the cone of nonnegative functions and the relationship between its Hilbert metric and the uniform norm.

Let $X$ be a compact metric space, and $C^+ = \{ f\in C(X) : f\geq0,\ f\not\equiv 0\}$ its cone of nonnegative functions. Write $\Theta_+$ for the Hilbert projective metric \eqref{eqn:Theta} associated to $C^+$.

\begin{lemma}[{\cite[Lemma 1.4]{EN95}, \cite[Proposition 4.14]{fN04}, \cite[Example 12.3.5]{VO16}}]
\label{lem:hilbert}
The Hilbert distance between any $f,g\in C^+$ is 
    \begin{equation}\label{eqn:Theta+}
    \Theta_+(f,g) =\log \sup_{\mu,\nu\in M(X)} \frac{\langle g,\mu\rangle \langle f,\nu\rangle}{\langle g,\nu\rangle \langle f,\mu\rangle}
= \log \sup_{x,y \in X} \frac{ g(x) f(y)}{g(y) f(x)}.
    \end{equation}
\end{lemma}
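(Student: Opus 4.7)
The plan is to compute $A(f,g)$ and $B(f,g)$ from their definitions in \eqref{eqn:AB} directly, which immediately yields the second equality in \eqref{eqn:Theta+}, and then to sandwich the measure-theoretic expression between the pointwise one and the ratio $B(f,g)/A(f,g)$.

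First I would unpack the partial order $\preceq$ on $C^+$: the condition $tf \preceq g$ means $g - tf \in C^+ \cup \{0\}$, i.e.\ $g(x) \geq tf(x)$ for every $x \in X$. A one-line computation (with the convention $g(x)/f(x) = +\infty$ when $f(x) = 0$) then gives
\[
A(f,g) = \inf_{x\in X} \frac{g(x)}{f(x)} \qquad\text{and}\qquad B(f,g) = \sup_{x\in X} \frac{g(x)}{f(x)}.
\]
Substituting into \eqref{eqn:Theta} yields the second equality in \eqref{eqn:Theta+}.

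For the first equality, the strategy is a two-sided bound. Taking $\mu = \delta_x$ and $\nu = \delta_y$ (which are elements of $M(X)$) shows that the supremum over $M(X)\times M(X)$ dominates the pointwise supremum. For the reverse inequality, I would use that the pointwise relations $A(f,g) f \leq g \leq B(f,g) f$ integrate against any $\mu \in M(X)$ with $\langle f,\mu\rangle > 0$ to give
\[
A(f,g) \leq \frac{\langle g,\mu\rangle}{\langle f,\mu\rangle} \leq B(f,g),
\]
so forming the ratio $\frac{\langle g,\mu\rangle\langle f,\nu\rangle}{\langle g,\nu\rangle\langle f,\mu\rangle}$ bounds the measure supremum by $B(f,g)/A(f,g) = e^{\Theta_+(f,g)}$. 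Combining the two inequalities gives equality throughout.

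The only subtlety is the degenerate case where $f$ vanishes at some point where $g$ is positive; then $B(f,g) = +\infty$ and the pointwise supremum is infinite, while taking $\mu$ concentrated where $f = 0$ but $g > 0$ (and $\nu = \delta_y$ with $f(y), g(y) > 0$) produces an infinite ratio on the measure side, so the infinities match on both sides. I do not anticipate any real obstacle beyond this bookkeeping: the lemma is essentially a direct unwinding of the definitions of $A$, $B$, and $\Theta$.
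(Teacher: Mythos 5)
The paper does not prove Lemma~\ref{lem:hilbert} directly; it cites \cite{EN95}, \cite{fN04}, and \cite{VO16} for the result. Your argument is the standard direct verification used in those sources and is correct in outline: unwinding the partial order on $C^+$ to compute $A(f,g)$ and $B(f,g)$ pointwise gives the second equality, Dirac masses give one direction of the first equality, and integrating the sandwich $A(f,g)\,f \le g \le B(f,g)\,f$ against $\mu$ and $\nu$ gives the other. The one wrinkle is your blanket convention $g(x)/f(x)=+\infty$ when $f(x)=0$: that is the right reading for $A$ (the infimum ignores such points) and for $B$ at points where $g(x)>0$, but it wrongly forces $B=+\infty$ at points where $f(x)=g(x)=0$, where the constraint $g(x)\le t f(x)$ is vacuous and should simply be dropped. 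For instance, take $f=g$ with a single common zero: your formula would give $B=\infty$, but in fact $B=1$ and $\Theta_+(f,g)=0$. The correct bookkeeping is to take $\sup$ and $\inf$ only over $\{x: f(x)>0\}$, setting $B=+\infty$ precisely when there exists $x$ with $f(x)=0<g(x)$. The lemma as stated in the paper carries the same implicit caveat (the right-hand side of \eqref{eqn:Theta+} is also formally $0/0$ at such points), and the paper only invokes the lemma for strictly positive functions, so this is a small but genuine correction to your conventions; the rest of your argument stands.
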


An important special case of Lemma \ref{lem:hilbert} arises by taking $g=\one$ to be constant:
\begin{equation}\label{eqn:sup/inf-0}
\Theta_+(f,\one) = \log \frac{\sup f}{\inf f}
\quad\Rightarrow\quad
\sup f = e^{\Theta_+(f,\one)} \inf f.
\end{equation}
We will need to relate the Hilbert metric $\Theta_+$ to the supremum norm $\|\cdot\|_\infty$ on $C(X)$. First, we give a short proof of the following useful lemma.

\begin{lemma}[{see \cite[Lemma 2.2]{LSV98}}]\label{lem:T+norm}
Let $X$ be a compact metric space and $m$ a Borel probability measure on $X$.
    Then for every $f,g\in\Lambda$ such that $\int f\, dm=\int g\, dm=1$, we have
\begin{equation}\label{eqn:norm-Theta}
    \|f-g\|_\infty \leq (e^{\Theta_+(f,g)}-1)\min(\|f\|_\infty ,\|g\|_\infty).
\end{equation}
\end{lemma}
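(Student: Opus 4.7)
The plan is to exploit the characterization of $A(f,g)$ and $B(f,g)$ from \eqref{eqn:AB} for the cone $C^+$: since $\preceq$ corresponds to pointwise inequality, we have
\[
A(f,g) = \inf_{x\in X} \frac{g(x)}{f(x)},
\quad
B(f,g) = \sup_{x\in X} \frac{g(x)}{f(x)},
\]
(with the standard interpretation via \eqref{eqn:AB} at points where $f$ vanishes, so that these quantities are the largest and smallest scalars $t$ with $tf \leq g$ and $g \leq tf$ pointwise, respectively). Writing $\alpha := \Theta_+(f,g)$, this rephrases \eqref{eqn:Theta} as $B = e^\alpha A$.

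The next step is to use the normalization to locate the value $1$ inside the interval $[A,B]$. Integrating the pointwise bound $A f \leq g \leq B f$ against $m$ and invoking $\int f\,dm = \int g\,dm = 1$ yields $A \leq 1 \leq B$. Combined with $B = e^\alpha A$, this gives $B \leq e^\alpha$ and $1 - A \leq B - A = A(e^\alpha - 1) \leq e^\alpha - 1$.

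Now the pointwise pinching $A f(x) \leq g(x) \leq B f(x)$ implies
\[
|f(x) - g(x)| \leq \max(B-1,\ 1-A)\, f(x) \leq (e^\alpha - 1)\, f(x)
\]
for every $x\in X$. Taking the supremum gives $\|f-g\| \leq (e^\alpha - 1)\|f\|$. Interchanging the roles of $f$ and $g$ (which leaves $\Theta_+(f,g)$ unchanged) gives the analogous bound with $\|g\|$ in place of $\|f\|$, and taking the smaller of the two yields the claim.

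The argument is essentially computational; the only minor obstacle is the standard one of making sense of ratios $g(x)/f(x)$ at common zeros of $f$ and $g$, which is handled cleanly by working with the quantities $A(f,g)$ and $B(f,g)$ via their definition \eqref{eqn:AB} in terms of the pointwise order on $C^+$ rather than through ratios.
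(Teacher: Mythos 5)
Your proof is correct and follows essentially the same route as the paper's: identify $A=A(f,g)$ and $B=B(f,g)$ for the pointwise order on $C^+$, integrate $Af\leq g\leq Bf$ against $m$ to get $A\leq 1\leq B$, and combine with $B/A=e^{\Theta_+(f,g)}$ to bound $\|f-g\|$; the only cosmetic difference is that you bound via $\max(B-1,1-A)$ while the paper uses $(B-A)/A$, which amounts to the same estimate. (You might add the one-line remark that the case $\Theta_+(f,g)=\infty$ is vacuous, as the paper does, but this is minor.)
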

\begin{proof}
    Fix $f,g\in C^+$ such that $\int f\,dm=\int g\,dm=1$.   Suppose $\Theta_+(f,g)<\infty$, otherwise there is nothing to prove. Let $A=A(f,g)$ and $B=B(f,g)$ be as in \eqref{eqn:AB}, with the partial order associated to $C^+$, which is just pointwise comparison. Since $\Theta_+(f,g)<\infty$, we have $0 < A \leq B < \infty$. For every $t\in (0,A)$, we have $tf\leq g$. We conclude from this that $Af \leq g$, and similarly, $g \leq Bf$.

By monotonicity of the integral, $A\int f\,dm \leq \int g\,dm\leq B\int f\,dm$, and we obtain $A\leq 1\leq B$. Using nonnegativity of $f$, this implies that
    \[
    (A-B)f\leq (A-1)f\leq g-f\leq (B-1)f\leq (B-A)f,
    \]
    from which we deduce (using \eqref{eqn:Theta}) that
\[
\|g-f\|_\infty \leq (B-A)\|f\|_\infty \leq \frac{B-A}{A}\|f\|_\infty
= (e^{\Theta_+(f,g)} - 1) \|f\|_\infty.
\]
This argument is symmetric in $f$ and $g$, so \eqref{eqn:norm-Theta} is proved.
\end{proof}

\begin{lemma}\label{lem:norm-Theta-2}
Let $X$ be a compact metric space, and $m$ a Borel probability measure on $X$. 
Let $C^+_1(m) := \{ f\in C^+ : \int f\,dm = 1\}$, and suppose that 
$Z \subset C^+_1(m)$ 
and $R,S>0$ are such that
\begin{equation}\label{eqn:RS}
\diam_{\Theta_+}(Z) \leq R
\quad\text{and}\quad
\Theta_+(f,\one) \leq S \text{ for all } f\in Z.
\end{equation}
Then for every $f,g\in Z$, we have
\begin{equation}\label{eqn:norm-Theta-2}
\|g-f\|_\infty \leq R^{-1} (e^R-1) e^S \Theta_+(f,g)
\end{equation}
\end{lemma}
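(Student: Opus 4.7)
The plan is to combine Lemma \ref{lem:T+norm} with a linearization of $e^t-1$ on the bounded interval $[0,R]$. Lemma \ref{lem:T+norm} already gives us
\[
\|g-f\| \leq (e^{\Theta_+(f,g)}-1) \min(\|f\|,\|g\|),
\]
so we only need to (i) bound $\min(\|f\|,\|g\|)$ by $e^S$, and (ii) convert the factor $e^{\Theta_+(f,g)}-1$ into something linear in $\Theta_+(f,g)$, exploiting the fact that $\Theta_+(f,g)\leq R$.

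For step (i), fix any $f\in Z$. Since $f\in C^+_1(m)$ with $m$ a probability measure and $f$ continuous, we have $\inf f \leq \int f\,dm = 1 \leq \sup f$. Combined with the hypothesis $\Theta_+(f,\one)\leq S$ and the formula \eqref{eqn:sup/inf-0}, this yields $\|f\|=\sup f \leq e^S \inf f \leq e^S$. The same bound applies to $g$, hence $\min(\|f\|,\|g\|)\leq e^S$.

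For step (ii), the function $\psi(t):=(e^t-1)/t$ is increasing on $(0,\infty)$ (elementary, by differentiating or by comparing power series coefficients). Since $\Theta_+(f,g)\leq \diam_{\Theta_+}(Z) \leq R$, we get
\[
e^{\Theta_+(f,g)}-1 = \psi(\Theta_+(f,g))\,\Theta_+(f,g) \leq \psi(R)\,\Theta_+(f,g) = R^{-1}(e^R-1)\,\Theta_+(f,g).
\]
(The degenerate case $\Theta_+(f,g)=0$ is trivial since then $f$ and $g$ are positive scalar multiples of each other and both integrate to $1$, forcing $f=g$.) Multiplying the two bounds gives \eqref{eqn:norm-Theta-2}.

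There is no real obstacle here; the lemma is a bookkeeping consequence of Lemma \ref{lem:T+norm}, the identity \eqref{eqn:sup/inf-0}, and the elementary convexity estimate $e^t-1\leq R^{-1}(e^R-1)t$ for $t\in[0,R]$. The only subtlety is remembering that the diameter hypothesis is needed precisely to replace the nonlinear quantity $e^{\Theta_+(f,g)}-1$ by a constant multiple of $\Theta_+(f,g)$, which is what makes this lemma a Lipschitz-type estimate (with an explicit constant) rather than just the Hölder-type estimate of Lemma \ref{lem:T+norm}.
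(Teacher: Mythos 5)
Your proof is correct and follows essentially the same route as the paper: invoke Lemma~\ref{lem:T+norm}, bound $\|f\|,\|g\|\leq e^S$ via $\inf f\leq 1\leq\sup f$ together with \eqref{eqn:sup/inf-0}, and then replace $e^t-1$ by $R^{-1}(e^R-1)t$ using that $t\mapsto(e^t-1)/t$ is increasing on $[0,R]$. The only (harmless) additions are the explicit mention of the degenerate case $\Theta_+(f,g)=0$ and writing $\min(\|f\|,\|g\|)$ rather than just $\|f\|$; neither changes the argument.
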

\begin{proof}
Given $f\in Z \subset C^+_1(m)$, we have $\int f\,dm = 1$,
so $\inf f \leq 1 \leq \sup f$, and \eqref{eqn:sup/inf-0} gives
\begin{equation}\label{eqn:norm-leq}
\|f\|_\infty = \sup f = e^{\Theta_+(f,\one)} \inf f \leq e^{\Theta_+(f,\one)} \leq e^S,
\end{equation}
with a similar bound for $\|g\|$.
By Lemma \ref{lem:T+norm}, we have
\begin{equation}\label{eqn:norm-Theta-3}
\|g-f\|_\infty \leq (e^{\Theta_+(f,g)} - 1) e^S.
\end{equation}
Observe that the function $t \mapsto \frac{e^t -1}t$ is increasing on $[0,R]$ since $e^t$ is convex, so for every $t\in [0,R]$, we have
\[
\frac{e^t-1}t \leq \frac{e^R-1}R
\quad\Rightarrow\quad
e^t - 1 \leq R^{-1} (e^R-1) t.
\]
Applying this inequality with $t = \Theta_+(f,g)$ and using \eqref{eqn:norm-Theta-3} proves \eqref{eqn:norm-Theta-2}.
\end{proof}

The following fact is immediate from Definition \ref{def:Hilbert}.

\begin{lemma}\label{lem:nested-cones}
If $\Omega$ is a Banach space and $\Lambda_1 \subset \Lambda_2 \subset \Omega$ are closed convex cones with associated Hilbert metrics $\Theta_1$ and $\Theta_2$, then $\Theta_2(f,g) \leq \Theta_1(f,g)$ for all $f,g \in \Lambda_1$.
\end{lemma}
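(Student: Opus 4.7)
The plan is to unwind Definition \ref{def:Hilbert} and observe that the inclusion $\Lambda_1 \subset \Lambda_2$ makes the partial order $\preceq_2$ \emph{coarser} than $\preceq_1$, which in turn makes the infimum $A$ larger and the supremum $B$ smaller as we pass from $\Lambda_1$ to $\Lambda_2$. The ratio $B/A$ therefore shrinks, yielding the claimed inequality after taking logarithms.

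More concretely, fix $f, g \in \Lambda_1$ and write $A_i(f,g)$, $B_i(f,g)$ for the quantities in \eqref{eqn:AB} defined using the partial order $\preceq_i$ associated to $\Lambda_i$. Since $\Lambda_1 \cup \{0\} \subset \Lambda_2 \cup \{0\}$, whenever $tf \preceq_1 g$ we have $g - tf \in \Lambda_1 \cup \{0\} \subset \Lambda_2 \cup \{0\}$, i.e., $tf \preceq_2 g$; the analogous implication holds for $g \preceq_i tf$. Consequently
\[
\{ t > 0 : tf \preceq_1 g \} \subset \{ t > 0 : tf \preceq_2 g \}
\quad\text{and}\quad
\{ t > 0 : g \preceq_1 tf \} \subset \{ t > 0 : g \preceq_2 tf \},
\]
so taking the supremum of the first pair and the infimum of the second gives $A_1(f,g) \leq A_2(f,g)$ and $B_2(f,g) \leq B_1(f,g)$.

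Combining these two inequalities yields $B_2(f,g)/A_2(f,g) \leq B_1(f,g)/A_1(f,g)$, and taking $\log$ produces $\Theta_2(f,g) \leq \Theta_1(f,g)$ as desired. There is no real obstacle here; the entire content of the lemma is the monotonicity of the defining partial orders in $\Lambda$, which is the reason the author labels the statement ``immediate.''
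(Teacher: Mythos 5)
Your proof is correct and is precisely the argument the paper has in mind; the paper states the lemma is ``immediate from Definition \ref{def:Hilbert}'' and gives no further details, and your unwinding of the inclusion $\Lambda_1 \cup \{0\} \subset \Lambda_2 \cup \{0\}$ into the monotonicity of $A$ and $B$ is exactly the omitted computation.
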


In particular, given any closed convex cone $\Lambda \subset C^+ \subset C(X)$ with associated Hilbert metric $\Theta$, we have
\begin{equation}\label{eqn:Thetas}
\Theta_+(f,g) \leq \Theta(f,g) \text{ for all } f,g\in \Lambda.
\end{equation}
We see from \eqref{eqn:sup/inf-0} and \eqref{eqn:Thetas} that
if $\one \in \Lambda$, then
\begin{equation}\label{eqn:sup/inf-1}
\sup f \leq e^{\Theta(f,\one)} \inf f
\quad\text{for all } f\in \Lambda.
\end{equation}
If $f,\one\in\Lambda$ and $\int f\,dm=1$ for some $m\in M(X)$, then \eqref{eqn:sup/inf-1} implies
$\|f\|_\infty\leq e^{\Theta(f,\one)}$.

\section{The forward cone RPF theorem}\label{sec:thm1}

In this section, we will prove Theorem \ref{thm:gen-forward}.
Throughout the section, we assume that $\{(X_n,L_n,\Lambda_n)\}_{n\geq 0}$ satisfy \ref{C1}, \ref{C2}, and \ref{C3}.

The proof will be broken up into several lemmas. The first of these 
uses \ref{C2} and the Birkhoff Contraction Theorem \ref{thm:Birkhoff} to deduce exponential cone contraction.

\begin{lemma}\label{lem:cone-contract}
With $\Delta,\TE$ as in \ref{C2}, fix the constants
$\edc = \tanh(\Delta/4)^{1/\TE} \in (0,1)$ and $C_1 = \Delta \edc^{-2\TE}$.
For every $n\in \NN$ and $k\geq\TE$, we have
\begin{equation}\label{eqn:cone-diam}
\diam_{\Theta_{n+k}} \bL_n^k \Lambda_n \leq C_1 \edc^{k+1}.
\end{equation}
\end{lemma}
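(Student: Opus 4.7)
The plan is to iterate Birkhoff's contraction theorem on blocks of $\TE$ iterates of the cocycle, using the uniform Hilbert diameter bound from \ref{C3}, and then convert discrete block counts into a clean exponential bound in $k$.

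First I would note that \ref{C1} and \ref{C3} together give $L_j \Lambda_j \subset \Lambda_{j+1}$ for every $j$, so by induction $\bL_n^j \Lambda_n \subset \Lambda_{n+j}$ and each $\bL_n^j$ is a positive linear map between the relevant cones. With the convention $\tanh \infty = 1$ built into Theorem \ref{thm:Birkhoff}, every such map is non-expanding for the Hilbert metric: $\Theta_{n+j}(\bL_n^j f, \bL_n^j g) \leq \Theta_n(f, g)$ for all $f, g \in \Lambda_n$.

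Next, set $\mu := \tanh(\Delta/4) \in [0, 1)$ so that $\edc = \mu^{1/\TE}$. Condition \ref{C3} combined with Theorem \ref{thm:Birkhoff} gives, for every $m$,
\[
\Theta_{m+\TE}(\bL_m^\TE f, \bL_m^\TE g) \leq \mu\, \Theta_m(f, g) \quad \text{for all } f, g \in \Lambda_m.
\]
Iterating this over $j$ consecutive $\TE$-blocks starting from $\Lambda_n$, and using the diameter bound $\Delta$ after the first block, I obtain, for every $j \geq 1$,
\[
\diam_{\Theta_{n+j\TE}}(\bL_n^{j\TE} \Lambda_n) \leq \mu^{j-1} \Delta.
\]

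For general $k \geq \TE$, I would write $k = q\TE + r$ with $q \geq 1$ and $0 \leq r \leq \TE - 1$. Factoring $\bL_n^k = \bL_{n+q\TE}^r \circ \bL_n^{q\TE}$ and invoking non-expansion of $\bL_{n+q\TE}^r$ established above,
\[
\diam_{\Theta_{n+k}}(\bL_n^k \Lambda_n) \leq \diam_{\Theta_{n+q\TE}}(\bL_n^{q\TE} \Lambda_n) \leq \mu^{q-1} \Delta.
\]
From $r \leq \TE - 1$ we get $q - 1 \geq (k - 2\TE + 1)/\TE$, so since $\mu \in [0,1)$, $\mu^{q-1} \leq \mu^{(k-2\TE+1)/\TE} = \edc^{k - 2\TE + 1} = \edc^{-2\TE}\, \edc^{k+1}$. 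Multiplying by $\Delta$ yields $\diam \leq \Delta \edc^{-2\TE}\, \edc^{k+1} = C_1 \edc^{k+1}$, as desired.

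No step should be the main obstacle; this is essentially a bookkeeping exercise combining Birkhoff's theorem with division-with-remainder. The only mild care required is passing from the integer block count $q - 1$ to a bound uniform in the remainder $r$, which is exactly what forces the constant $C_1 = \Delta \edc^{-2\TE}$ rather than simply $\Delta$.
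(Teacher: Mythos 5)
Your proof is correct and follows essentially the same strategy as the paper's: partition the $k$ iterates into $\TE$-blocks, apply the Birkhoff contraction factor $\tanh(\Delta/4)$ once per block after the first, use the non-expansion guaranteed by cone invariance to absorb the remainder, and then convert the block count to a clean power of $\edc$ at the cost of the constant $C_1 = \Delta\edc^{-2\TE}$. The only cosmetic difference is that you place the remainder iterates at the end (after the contracting blocks), whereas the paper places them at the beginning (bundled with the first block into a segment of length $t \in [\TE, 2\TE)$); both arrangements yield the identical bound $\Delta \mu^{q-1}$.
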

\begin{proof}
Given $n,k$, let $\ell = \lfloor \frac{k-\TE}{\TE} \rfloor$ so that 
$\ell\TE \leq k-\TE < (\ell+1)\TE$.
Let $t := k-\ell \TE$, so $t\geq \TE$ and \ref{C2} gives $\diam_{\Theta_{n+t}} \bL_n^t \Lambda_n \leq \Delta$. 
For every $i\in \{0,\dots, \ell-1\}$, \ref{C2} and the Birkhoff Contraction Theorem imply that $\bL_{n+t+i\TE}^\TE \colon \Lambda_{n+t+i\TE} \to \Lambda_{n+t+(i+1)\TE}$ contracts the projective metric by a factor of $\tanh(\Delta/4) = \edc^\TE$, and thus
\[
\diam_{\Theta_{n+k}} \bL_n^k\Lambda_n \leq 
(\edc^\TE)^\ell \diam_{\Theta_{n+t}} {\bL_n^t \Lambda_n} \leq
\Delta \edc^{\ell\TE}
\leq \Delta \edc^{k-2\TE+1} = (\Delta \edc^{-2\TE}) \edc^{k+1},
\]
where the last inequality uses the fact that $k-\TE \leq \ell\TE + \TE -1$.
\end{proof}

Now we use Lemma \ref{lem:cone-contract} to prove that the first sequence in \eqref{eqn:lnmn-0} is uniformly exponentially Cauchy, which will imply the claims concerning $\lambda_n$ in \ref{get-lm} and \ref{get-exp1}.

\begin{lemma}\label{lem:lambda}
Let $\edc,C_1$ be as in Lemma \ref{lem:cone-contract}.
Given $n,k\in \NN$ and $\sigma \in M(X_{n+k})$, let
\[
r_{n,k}(\sigma) := \log\frac{\langle \bL_n^k \one, \sigma \rangle}{\langle \bL_{n+1}^{k-1} \one, \sigma \rangle}.
\]
For every $n,k,i,j\in \NN$ satisfying $i,j\geq k$, we have
\begin{equation}\label{eqn:r-leq}
|r_{n,i}(\sigma) - r_{n,j}(\sigma')| \leq C_1 \edc^k
\quad\text{for all } \sigma \in M(X_{n+i})
\text{ and } \sigma' \in M(X_{n+j}).
\end{equation}
\end{lemma}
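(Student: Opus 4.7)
The plan is to dualize $r_{n,j}(\sigma')$ so that both quantities live on the common space $X_{n+i}$, and then recognize the resulting oscillation as a Hilbert projective distance that Lemma~\ref{lem:cone-contract} already controls exponentially.

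Without loss of generality assume $i\le j$, and set $\tilde\sigma:=(\bL_{n+i}^{j-i})^*\sigma'\in M(X_{n+i})$. Iterating the duality~\eqref{eqn:Ln*2} gives $\langle\bL_n^j\one,\sigma'\rangle=\langle\bL_n^i\one,\tilde\sigma\rangle$ and $\langle\bL_{n+1}^{j-1}\one,\sigma'\rangle=\langle\bL_{n+1}^{i-1}\one,\tilde\sigma\rangle$, so $r_{n,j}(\sigma')=r_{n,i}(\tilde\sigma)$. Writing $F_i:=\bL_n^i\one$ and $G_i:=\bL_{n+1}^{i-1}\one$ (both in $C_{n+i}^+$ by \ref{C1} and \ref{C3}), I reduce to
\[
|r_{n,i}(\sigma)-r_{n,i}(\tilde\sigma)|=\Bigl|\log\tfrac{\langle F_i,\sigma\rangle\langle G_i,\tilde\sigma\rangle}{\langle G_i,\sigma\rangle\langle F_i,\tilde\sigma\rangle}\Bigr|,
\]
which by the measure-theoretic representation of the Hilbert metric on $C_{n+i}^+$ in Lemma~\ref{lem:hilbert} is at most $\Theta_+(F_i,G_i)$.

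To bound $\Theta_+(F_i,G_i)$, note that \ref{C1} gives $\one\in\Lambda_{n+1}$ and \ref{C3} gives $L_n\one\in\Lambda_{n+1}$, so both $F_i=\bL_{n+1}^{i-1}(L_n\one)$ and $G_i=\bL_{n+1}^{i-1}\one$ lie in $\bL_{n+1}^{i-1}\Lambda_{n+1}$. By the cone-nesting inequality~\eqref{eqn:Thetas} and Lemma~\ref{lem:cone-contract} applied with base index $n+1$ and exponent $i-1$, one obtains (provided $i-1\ge\TE$) $\Theta_+(F_i,G_i)\le\diam_{\Theta_{n+i}}\bL_{n+1}^{i-1}\Lambda_{n+1}\le C_1\edc^{i}\le C_1\edc^{k}$, proving the claim.

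The main obstacle is the edge case where $k\le\TE$, in which $\min(i,j)\le\TE$ can occur and Lemma~\ref{lem:cone-contract} does not directly yield the required exponential diameter bound on $\bL_{n+1}^{i-1}\Lambda_{n+1}$. In that regime the right-hand side $C_1\edc^{k}\ge\Delta\edc^{-\TE}$ is already rather large, and one absorbs the remaining cases into the oversized prefactor $C_1=\Delta\edc^{-2\TE}$ by combining the uniform finite-diameter bound $\diam_{\Theta_{n+\TE}}\bL_n^\TE\Lambda_n\le\Delta$ from~\ref{C3} with the triangle inequality in $\Theta_+$. This bookkeeping — matching the constant on small-$k$ cases that are irrelevant for the Cauchy convergence ultimately used — is the only genuinely delicate point in the proof.
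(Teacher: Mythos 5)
Your argument follows the paper's proof very closely: dualize so that both $r$ values become ratios over a common index, recognize the resulting log-oscillation as bounded by the Hilbert distance $\Theta_+$ between $\bL_n^m\one$ and $\bL_{n+1}^{m-1}\one$ for a common exponent $m$ (via Lemma~\ref{lem:hilbert} and the nesting inequality~\eqref{eqn:Thetas}), then invoke Lemma~\ref{lem:cone-contract} on $\bL_{n+1}^{m-1}\Lambda_{n+1}$. The only difference is cosmetic: you pull back only $\sigma'$ to $X_{n+i}$ (so $m=\min(i,j)$), whereas the paper pulls both $\sigma$ and $\sigma'$ back to $X_{n+k}$ (so $m=k$); your choice yields the marginally sharper intermediate bound $C_1\edc^{\min(i,j)}$ before the final $\leq C_1\edc^k$. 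Both variants need $m-1\geq\TE$ so that Lemma~\ref{lem:cone-contract} applies with base $n+1$ and exponent $m-1$ — a restriction the paper leaves implicit.

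Your closing paragraph correctly flags this restriction, which is good practice, but the proposed remedy does not actually go through as stated: when $i\leq\TE$, neither $F_i=\bL_n^i\one$ nor $G_i=\bL_{n+1}^{i-1}\one$ lies in a $\TE$-fold cone image, so condition~\ref{C3} gives no handle on $\Theta_+(F_i,G_i)$, and the triangle inequality cannot bridge the gap — for instance, there is no a priori bound on $\Theta_+(L_n\one,\one)$ when $\TE\geq 2$. This is not a defect of your proof relative to the paper, which is silent on the same point; the practical resolution is that the lemma is only used downstream for $\min(i,j)$ large enough that your argument applies unchanged.
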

\begin{proof}
Fix $n,k\in\NN$. Given $i\geq k$ and $\sigma \in M(X_{n+i})$, 
consider the pulled-back measure $\sigma_{k,i} := (\bL_{n+k}^{i-k})^* \sigma \in M(X_{n+k})$.
Then
\[
r_{n,i}(\sigma) = \log \frac
{\langle \bL_n^i \one, \sigma \rangle}
{\langle \bL_{n+1}^{i-1}\one, \sigma \rangle}
= \log\frac 
{\langle \bL_{n+k}^{i-k}(\bL_n^k \one), \sigma \rangle}
{\langle \bL_{n+k}^{i-k}(\bL_{n+1}^{k-1} \one), \sigma \rangle}
= \log\frac
{\langle \bL_n^k \one,\sigma_{k,i} \rangle}
{\langle \bL_{n+1}^{k-1} \one, \sigma_{k,i} \rangle}.
\]
Using this, and a similar expression for $r_{n,j}(\sigma')$ when $j\geq k$ and $\sigma' \in M(X_{n+j})$, we use Lemma \ref{lem:hilbert} to get the following estimate:
\[
|r_{n,i}(\sigma) - r_{n,j}(\sigma')| = \bigg|\log\dfrac{\langle \bL_n^k \one,\sigma_{k,i}\rangle\langle \bL_{n+1}^{k-1}\one,\sigma'_{k,j}\rangle}{\langle \bL_{n+1}^{k-1}\one,\sigma_{k,i}\rangle\langle \bL_n^k \one,\sigma'_{k,j}\rangle}\bigg|
\leq \Theta_{n+k}^+( \bL_n^k\one, \bL_{n+1}^{k-1}\one ).
\]
Since $\one \in \Lambda_n$ by \ref{C1}, we see from \ref{C2} that $L_n\one \in \Lambda_{n+1}$, and thus
both $\bL_n^k\one$ and $\bL_{n+1}^{k-1}\one$ lie in 
$\bL_{n+1}^{k-1} \Lambda_{n+1} \subset \Lambda_{n+k}$. 
By Lemma \ref{lem:nested-cones} and the assumption from \ref{C1} that $\Lambda_{n+k} \subset C_{n+k}^+$, we have $\Theta_{n+k}^+ \leq \Theta_{n+k}$, so
\[
\Theta_{n+k}^+( \bL_n^k\one, \bL_{n+1}^{k-1}\one )
\leq \diam_{\Theta_{n+k}} \bL_{n+1}^{k-1} \Lambda_{n+1}.
\]
Now \eqref{eqn:r-leq} follows from Lemma \ref{lem:cone-contract}.
\end{proof}

We see from Lemma \ref{lem:lambda} that for any $n\geq 0$ and any choice of $\sigma_{n+k}$, the sequence of real numbers $\{ r_{n,k}(\sigma_{n+k})\}_{k\geq 0}$ is Cauchy, and thus $r_n := \lim_{k\to\infty} r_{n,k}(\sigma_{n+k})$ exists. Taking $\lambda_n := e^{r_n}$, we see that the first half of \eqref{eqn:lnmn-0} and of \eqref{eqn:lnmn-exp} is satisfied.\\

To complete the proof of items \ref{get-lm} and \ref{get-exp1}, we turn our attention to the second halves of \eqref{eqn:lnmn-0} and \eqref{eqn:lnmn-exp}, which concern measures on $X_n$.

\begin{lemma}\label{lem:fibEigMeas}
Let $C_1,\edc$ be as in Lemma \ref{lem:cone-contract}.
Given $n,k\in \NN$ and $\sigma \in M(X_{n+k})$, let
\begin{equation}\label{eqn:nu-k}
\nu_{n,k}(\sigma) := \dfrac{({\bL}_n^k)^*\sigma}{\langle\one,({\bL}_n^k)^*\sigma\rangle} \in M(X_n).
\end{equation}
For every $n,k,i,j \in \NN$ satisfying $i,j\geq k$, 
every $\sigma\in M(X_{n+i})$ and $\sigma' \in M(X_{n+j})$,
and every $f\in \Lambda_n$, we have
\begin{equation}\label{eqn:nu-m}
\Big|\int f \, d\nu_{n,i}(\sigma)-\int f  \,d\nu_{n,j}(\sigma') \Big|
\leq C_1\| f \|\edc^k.
\end{equation}
\end{lemma}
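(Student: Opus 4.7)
The plan is to rewrite each $\int f\,d\nu_{n,i}(\sigma)$ as a ratio of linear pairings that the Hilbert-metric contraction of Lemma \ref{lem:cone-contract} controls directly, and then convert the resulting multiplicative estimate into an additive one.

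Concretely, using the duality \eqref{eqn:Ln*2} together with the factorization $\bL_n^i = \bL_{n+k}^{i-k}\circ\bL_n^k$, I pull back $\sigma\in M(X_{n+i})$ to $\sigma_{k,i} := (\bL_{n+k}^{i-k})^*\sigma \in M(X_{n+k})$, and similarly $\sigma'_{k,j} := (\bL_{n+k}^{j-k})^*\sigma' \in M(X_{n+k})$, to obtain
\[
A := \int f\,d\nu_{n,i}(\sigma) = \frac{\langle \bL_n^k f,\sigma_{k,i}\rangle}{\langle \bL_n^k\one,\sigma_{k,i}\rangle}, \quad B := \int f\,d\nu_{n,j}(\sigma') = \frac{\langle \bL_n^k f,\sigma'_{k,j}\rangle}{\langle \bL_n^k\one,\sigma'_{k,j}\rangle},
\]
exactly as in the proof of Lemma \ref{lem:lambda}. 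Setting $F := \bL_n^k f$ and $G := \bL_n^k\one$ (both in $\bL_n^k\Lambda_n \subset \Lambda_{n+k}$ by \ref{C1} and \ref{C3}), the quotient $A/B$ is the cross-ratio $\langle F,\sigma_{k,i}\rangle \langle G,\sigma'_{k,j}\rangle / (\langle G,\sigma_{k,i}\rangle \langle F,\sigma'_{k,j}\rangle)$. Hence Lemma \ref{lem:hilbert}, combined with $\Theta_{n+k}^+ \leq \Theta_{n+k}$ (Lemma \ref{lem:nested-cones}) and Lemma \ref{lem:cone-contract}, yields
\[
|\log(A/B)| \leq \Theta_{n+k}^+(F,G) \leq \diam_{\Theta_{n+k}}\bL_n^k\Lambda_n \leq C_1\edc^{k+1}.
\]

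To convert this to an additive estimate, note that $0 \leq f \leq \|f\|\one$ and positivity of $\bL_n^k$ give $0 \leq F \leq \|f\|G$; since $\nu_{n,i}(\sigma)$ and $\nu_{n,j}(\sigma')$ are probability measures by construction, this forces $A, B \in [0, \|f\|]$. Assuming WLOG that $A \leq B$ and applying $1 - e^{-t} \leq t$,
\[
|A - B| = B(1 - A/B) \leq \|f\|\bigl(1 - e^{-C_1\edc^{k+1}}\bigr) \leq C_1\|f\|\edc^{k+1} \leq C_1\|f\|\edc^k,
\]
which is the desired inequality. The only real obstacle is minor bookkeeping for $k < \TE$, where Lemma \ref{lem:cone-contract} does not directly supply a diameter bound; there the trivial estimate $|A - B| \leq \|f\|$ is dominated by $C_1\|f\|\edc^k$ as soon as $\Delta \geq 1$ in \ref{C3} (which we may always arrange by enlarging $\Delta$), since then $C_1\edc^k = \Delta\edc^{k-2\TE} \geq 1$ for all $k \leq 2\TE$.
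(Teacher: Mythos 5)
Your proof is correct and follows essentially the same route as the paper: pull $\sigma$ and $\sigma'$ back to $M(X_{n+k})$, recognize the ratio of pairings as a cross‑ratio controlled by $\Theta_{n+k}^+(\bL_n^k f,\bL_n^k\one)$ via Lemmas \ref{lem:hilbert}, \ref{lem:nested-cones}, and \ref{lem:cone-contract}, and then convert the multiplicative bound to an additive one. The only cosmetic difference is in that last conversion, where the paper normalizes $g=f/\|f\|$ so that $\ip{g,\nu}\in(0,1]$ and uses $\tfrac{d}{dt}\log t\geq 1$ there, while you use $0\leq\bL_n^k f\leq\|f\|\bL_n^k\one$ and $1-e^{-t}\leq t$; these are equivalent elementary estimates, and your explicit remark about the range $k<\TE$ (handled by the trivial bound once $\Delta\geq 1$) is a small point the paper leaves implicit.
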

\begin{proof}
Given $f\in \Lambda_n$, let $g = f/\|f\|$. Then $\|g\| = 1$, so for every $\nu\in M(X_n)$, we have $\langle g, \nu\rangle \in (0,1]$. Since $\frac d{dt} \log t \geq 1$ on this interval, we see that
\begin{equation}\label{eqn:nu-log}
\begin{aligned}
|\ip{f,\nu_{n,i}(\sigma)} - \ip{f,\nu_{n,j}(\sigma')}|
&= \big|\ip{g,\nu_{n,i}(\sigma)} - \ip{g,\nu_{n,j}(\sigma')}\big| \cdot \|f\| \\
&\leq \big|\log\ip{g,\nu_{n,i}(\sigma)} - \log\ip{g,\nu_{n,j}(\sigma')}\big| \cdot \|f\|.
\end{aligned}
\end{equation}
Given $i\geq k\geq 1$ and $\sigma \in M(X_{n+i})$,
let $\sigma_{k,i} := (\bL_{n+k}^{i-k})^* \sigma \in M(X_{n+k})$.
Then we have
\begin{equation}\label{eqn:f-nui}
\langle g ,\nu_{n,i}(\sigma)\rangle
=\dfrac{\langle g ,( \bL_n^i)^*\sigma\rangle}{\langle\one,( \bL_n^i)^*\sigma\rangle}
=\dfrac{\langle \bL_n^k g ,( \bL_{n+k}^{i-k})^*\sigma\rangle}{\langle \bL_n^k\one,( \bL_{n+k}^{i-k})^*\sigma\rangle}
= \frac{\ip{\bL_n^kg,\sigma_{k,i}}}{\ip{\bL_n^k\one,\sigma_{k,i}}}.
\end{equation}
Combining \eqref{eqn:nu-log} and \eqref{eqn:f-nui} and using Lemmas \ref{lem:hilbert} and \ref{lem:nested-cones} gives
\[
|\langle f, \nu_{n,i}(\sigma)\rangle - \langle f,\nu_{n,j}(\sigma') \rangle|
\leq \Big| \log
\frac{\langle \bL_n^k g, \sigma_{k,i} \rangle \langle \bL_n^k \one, \sigma'_{k,j} \rangle}
{\langle \bL_n^k \one, \sigma_{k,i} \rangle \langle \bL_n^k g, \sigma'_{k,j} \rangle} \Big| \cdot \|f\|
\leq \Theta_{n+k}(\bL_n^k\one, \bL_n^k g) \cdot \|f\|,
\]
and then Lemma \ref{lem:cone-contract} proves \eqref{eqn:nu-m}.
\end{proof}

The following two lemmas will complete the proof of item \ref{get-lm} of Theorem \ref{thm:gen-forward}. The crucial difference from Lemma \ref{lem:fibEigMeas} is that we will use \ref{C3} to consider any $f\in C(X_n)$, not just $f\in \Lambda_n$. The cost is that we will lose the explicit convergence rate in \eqref{eqn:nu-m}.

\begin{lemma}\label{lem:nu-Cauchy}
Fix $n\in \NN$ and $f\in C(X_n)$. For every $\eps>0$, there exists $k\in \NN$ such that for every $i,j\geq k$, every $\sigma\in M(X_{n+i})$, and every $\sigma' \in M(X_{n+j})$, the measures $\nu_{n,i}(\sigma), \nu_{n,j}(\sigma') \in M(X_n)$ defined in \eqref{eqn:nu-k} satisfy 
\begin{equation}\label{eqn:sigma'}
\Big|\int f \, d\nu_{n,i}(\sigma)-\int f  \,d\nu_{n,j}(\sigma')\Big| < \eps.
\end{equation}
\end{lemma}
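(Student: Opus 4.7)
The plan is to upgrade Lemma \ref{lem:fibEigMeas} from functions in $\Lambda_n$ to general $f\in C(X_n)$ by approximating with differences in $\Lambda_n^\infty-\Lambda_n^\infty$, which is dense by \ref{C4}. Since each $\nu_{n,i}(\sigma)$ is a probability measure (by construction $\ip{\one,\nu_{n,i}(\sigma)}=1$), uniform-norm approximation of the integrand carries over to approximation of the integral. Given $\eps>0$, choose $g,h\in\Lambda_n^\infty$ with $\|f-(g-h)\|<\eps/3$. Using the inclusions $\Lambda_n^p\subset\Lambda_n^{p'}$ for $p\leq p'$ (immediate from \ref{C3}), we may take a common index $p\geq \TE$ with $g,h\in\Lambda_n^p$; set $G=\bL_n^p g$, $H=\bL_n^p h$, and $E=\bL_n^p\one$, all three lying in $\Lambda_{n+p}$.

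The key algebraic identity, obtained for $i\geq p$ by factoring $(\bL_n^i)^*=(\bL_n^p)^*(\bL_{n+p}^{i-p})^*$ and dividing numerator and denominator by $\ip{\one,(\bL_{n+p}^{i-p})^*\sigma}$, is
\[
\ip{g,\nu_{n,i}(\sigma)}=\frac{\ip{G,\nu_{n+p,\,i-p}(\sigma)}}{\ip{E,\nu_{n+p,\,i-p}(\sigma)}},
\]
and analogously for $h$. This reduces the problem to controlling quotients of integrals against the lower-level measures $\nu_{n+p,i-p}(\sigma)$, with integrands that now lie in the cone $\Lambda_{n+p}$. By Lemma \ref{lem:fibEigMeas} applied at level $n+p$, each of $\ip{G,\nu_{n+p,i-p}(\sigma)}$, $\ip{H,\nu_{n+p,i-p}(\sigma)}$, and $\ip{E,\nu_{n+p,i-p}(\sigma)}$ is uniformly Cauchy in $\sigma$ as $i\to\infty$, at exponential rate. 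Combining the Cauchy estimates for numerators and denominators (for both $g$ and $h$) with the $\eps/3$ approximation error then yields the claimed Cauchy-ness of $\ip{f,\nu_{n,i}(\sigma)}$ for $i,j$ sufficiently large.

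The main obstacle is verifying that the denominator $\ip{E,\nu_{n+p,i-p}(\sigma)}$ is bounded below by a positive constant independent of $i$ and $\sigma$, so that the Cauchy control on numerator and denominator passes through the quotient; equivalently, $\inf E>0$. This is where the choice $p\geq\TE$ enters. Writing $E=\bL_{n+p-\TE}^\TE F$ with $F:=\bL_n^{p-\TE}\one\in\Lambda_{n+p-\TE}$ and setting $E':=\bL_{n+p-\TE}^\TE\one$, both $E$ and $E'$ lie in $\bL_{n+p-\TE}^\TE\Lambda_{n+p-\TE}$, which by \ref{C3} has $\Theta_{n+p}$-diameter at most $\Delta$. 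Unpacking Definition \ref{def:Hilbert} together with closedness of $\Lambda_{n+p}\cup\{0\}$ gives an $A>0$ with $AE'\preceq E$, and \ref{C1} promotes this cone inequality to the pointwise bound $E\geq AE'$. Since the second half of \ref{C3} forces $\inf E'\geq e^{-\Delta}\sup E'>0$, we obtain $\inf E\geq A\inf E'>0$, hence $\ip{E,\nu}\geq\inf E>0$ for every probability measure $\nu$ on $X_{n+p}$, completing the argument.
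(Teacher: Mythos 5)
Your proof is correct, and it follows the same high-level strategy as the paper: use \ref{C4} to approximate $f$ by a difference in some $\Lambda_n^p$, then reduce to a Cauchy estimate for cone functions and pass the $\eps$-approximation through the probability measures. The difference is in how the reduction to level $n+p$ is carried out. The paper asserts (its display \eqref{eqn:nu-p}) that for $g\in\Lambda_n^p$ one gets a Cauchy bound directly ``from \eqref{eqn:nu-m},'' but this is not literally an application of Lemma \ref{lem:fibEigMeas}: applying that lemma at level $n+p$ to $\bL_n^p g$ controls $\ip{\bL_n^p g,\nu_{n+p,i-p}(\sigma)}$, which differs from $\ip{g,\nu_{n,i}(\sigma)}$ by exactly the denominator mismatch you identify. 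You handle this by writing $\ip{g,\nu_{n,i}(\sigma)}$ as the quotient $\ip{G,\nu_{n+p,i-p}(\sigma)}/\ip{E,\nu_{n+p,i-p}(\sigma)}$ and controlling numerator and denominator separately, which then forces you to produce the lower bound $\inf E>0$; your derivation of this (using $p\geq\TE$, the diameter bound in \ref{C3}, closedness of the cone to attain $A$, and the positivity in \ref{C1}) is correct and is a genuine addition not present in the paper's argument. The alternative -- and likely what the paper's authors had in mind -- is to avoid the quotient entirely by rerunning the Hilbert-metric computation from the proof of Lemma \ref{lem:fibEigMeas}: normalize $g$ by $\|g\|$, use the derivative-of-log trick, and bound $\Theta_{n+k}^+(\bL_n^k g,\bL_n^k\one)$ by $\diam_{\Theta_{n+k}}\bL_{n+p}^{k-p}\Lambda_{n+p}\le C_1\edc^{k-p}$ via Lemma \ref{lem:cone-contract}, which yields the cleaner constant $C_1\|g\|\edc^{k-p}$ and needs no lower bound on a denominator. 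Both routes work; yours is more explicit and self-contained, at the cost of the extra positivity lemma and a slightly larger constant.
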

\begin{proof}
First observe that for any $p\in \NN$ and any $g\in \Lambda_n^p$, we have $\bL_n^p g \in \Lambda_{n+p}$ by definition, and thus for every $i,j \geq k\geq p$, \eqref{eqn:nu-m} gives
\begin{equation}\label{eqn:nu-p}
\Big|\int g \, d\nu_{n,i}(\sigma)-\int g  \,d\nu_{n,j}(\sigma')\Big|\leq C_1\| \bL_n^p g \|\edc^{k-p}.
\end{equation}
Given any $f\in C(X)$ and any $\eps>0$, by \ref{C3}, there exist $p\in \NN$ and $f^\pm \in \Lambda_n^p$ such that $\|f - (f^+ - f^-)\| < \eps/4$. 
Since $\edc\in (0,1)$, there exists $k\in \NN$ such that
$C_1 \|\bL_n^p g\| \edc^{k-p} < \eps/4$ for $g=f^+$ and $g=f^-$.
Then for every $i,j\geq k$, we have
\begin{align*}
| \ip{f,\nu_{n,i}(\sigma)} - \ip{f,\nu_{n,j}(\sigma')}|
&\leq |\ip{(f^+ - f^-),\nu_{n,i}(\sigma)} - \ip{(f^+ - f^-),\nu_{n,j}(\sigma')}| + 2\eps/4 \\
&\leq C_1 (\|\bL_n^p f^+ \| + \|\bL_n^p f^-\|) \edc^{k-p} + \eps/2 < \eps.\qedhere
\end{align*}
\end{proof}

\begin{lemma}\label{lem:mn}
For every $n\in \NN$, there exists $m_n \in M(X_n)$ such that given any $\{\sigma_{n+k} \in M(X_{n+k}) \}_{k\in \NN}$, the sequence $\nu_{n,k}(\sigma_{n+k}) \in M(X_n)$ converges to $m_n$ in the weak* topology as $k\to\infty$. Moreover, this limit is independent of the choice of $\sigma_{n+k}$.
\end{lemma}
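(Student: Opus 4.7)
The plan is to use Lemma \ref{lem:nu-Cauchy} to show that for every $f\in C(X_n)$, the sequence of scalars $\ip{f,\nu_{n,k}(\sigma_{n+k})}$ is Cauchy, and then to identify the limiting linear functional via the Riesz representation theorem.

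Fix $n\in\NN$ and any sequence $\{\sigma_{n+k} \in M(X_{n+k})\}_{k\in\NN}$. Given $f\in C(X_n)$ and $\eps>0$, Lemma \ref{lem:nu-Cauchy} (applied with $\sigma = \sigma_{n+i}$ and $\sigma' = \sigma_{n+j}$) produces $k_0$ such that for all $i,j\geq k_0$,
\[
\big|\ip{f,\nu_{n,i}(\sigma_{n+i})} - \ip{f,\nu_{n,j}(\sigma_{n+j})}\big| < \eps.
\]
Hence $\{\ip{f,\nu_{n,k}(\sigma_{n+k})}\}_k$ is a Cauchy sequence in $\RR$, and I can define
\[
\Lambda(f) := \lim_{k\to\infty} \ip{f,\nu_{n,k}(\sigma_{n+k})}.
\]
The same application of Lemma \ref{lem:nu-Cauchy} with $\sigma$ coming from one sequence and $\sigma'$ from another shows that $\Lambda(f)$ does not depend on the choice of $\{\sigma_{n+k}\}$. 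Next I would verify that $\Lambda \colon C(X_n)\to\RR$ is a positive linear functional with $\Lambda(\one)=1$. Linearity is immediate from linearity of integration; positivity follows because each $\nu_{n,k}(\sigma_{n+k})$ is (by construction in \eqref{eqn:nu-k}) a positive measure of total mass one, so $\ip{f,\nu_{n,k}(\sigma_{n+k})}\geq 0$ whenever $f\geq 0$; and normalization follows because $\ip{\one,\nu_{n,k}(\sigma_{n+k})}=1$ for every $k$.

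By the Riesz representation theorem, there is a unique Borel probability measure $m_n\in M(X_n)$ with $\Lambda(f) = \int f\,dm_n$ for every $f\in C(X_n)$. Thus $\int f\,d\nu_{n,k}(\sigma_{n+k}) \to \int f\,dm_n$ for every $f\in C(X_n)$, which is exactly weak* convergence $\nu_{n,k}(\sigma_{n+k}) \to m_n$. The independence of $m_n$ from the choice of sequence $\{\sigma_{n+k}\}$ was established above at the level of $\Lambda$, and hence also holds for the representing measure $m_n$ by uniqueness in Riesz representation.

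There is no serious obstacle here: all the analytic work has been absorbed into Lemma \ref{lem:nu-Cauchy}, and the remaining step is the standard passage from a uniformly Cauchy family of positive normalized linear functionals to a limiting probability measure. The only point requiring a small amount of care is ensuring that one uses the same $\eps$ for the two different sequences $\{\sigma_{n+k}\}$ and $\{\sigma'_{n+k}\}$ when checking independence of the limit; but this is immediate since Lemma \ref{lem:nu-Cauchy} allows $\sigma$ and $\sigma'$ to be chosen independently in $M(X_{n+i})$ and $M(X_{n+j})$ respectively.
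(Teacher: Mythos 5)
Your proof is correct and takes essentially the same route as the paper: show the scalars $\ip{f,\nu_{n,k}(\sigma_{n+k})}$ form a Cauchy sequence via Lemma \ref{lem:nu-Cauchy}, pass to the limiting positive bounded linear functional, and invoke Riesz representation to obtain $m_n$. The only cosmetic difference is that you explicitly note $\Lambda(\one)=1$, which the paper defers to the proof of \ref{get-L*}.
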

\begin{proof}
For every $f\in C(X_n)$,
the sequence $\{\ip{f,\nu_{n,k}(\sigma_{n+k})}\}_{k\in\NN}$ is Cauchy
by Lemma \ref{lem:nu-Cauchy}. In particular, the following limit exists:
\begin{equation}\label{eqn:ell-n}
\ell_n(f) := \lim_{k\to\infty} \int f \,d\nu_{n,k}(\sigma_{n+k}).
\end{equation}
The map $\ell_n \colon C(X_n) \to \RR$ is a pointwise limit of positive uniformly bounded linear functionals, so it is a positive bounded linear functional, and hence by the Riesz representation theorem there exists $m_n \in M(X_n)$ such that $\ell_n(f) = \int f\,dm_n$ for all $f\in C(X)$. Then \eqref{eqn:ell-n} implies that $\nu_{n,k}(\sigma_{n+k})\to m_n$ in the weak* topology. The fact that the limit is independent of the choice of $\sigma_{n+k}$ follows from the fact that in Lemma \ref{lem:nu-Cauchy}, the choice of $k=k(\eps)$ is independent of the measures $\sigma,\sigma'$.
\end{proof}

At this point we can also observe that \eqref{eqn:nu-m}
 establishes item \ref{get-exp1} of Theorem \ref{thm:gen-forward}, so it remains to prove items \ref{get-L*} and \ref{get-m!}, which is done in the next lemma.

\begin{lemma}\label{lem:eigen1}
  The measures $\{m_n\}$ given by Lemma \ref{lem:mn} 
satisfy $L_n^* m_{n+1} = \lambda_n m_n$ for every $n\in \NN$, and
are the unique Borel probability measures for which ${L}_n^*m_{n+1}$ is a scalar multiple of $m_n$ for every $n$.  
\end{lemma}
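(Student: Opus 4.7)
The plan is to prove both assertions by testing against continuous functions and exploiting the freedom in the choice of auxiliary measure sequence granted by Lemma \ref{lem:mn}.

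For the eigenmeasure identity $L_n^* m_{n+1} = \lambda_n m_n$, I would fix $f \in C(X_n)$, note that $L_n f \in C(X_{n+1})$ by boundedness of $L_n$ from \ref{C3}, and apply the weak* convergence of Lemma \ref{lem:mn} with any fixed auxiliary sequence $\{\sigma_\ell\}_{\ell \geq n+1}$ to write
\[
\int L_n f \, dm_{n+1} = \lim_{k \to \infty} \frac{\langle \bL_{n+1}^{k}(L_n f), \sigma_{n+1+k}\rangle}{\langle \bL_{n+1}^k \one, \sigma_{n+1+k}\rangle}.
\]
Using $\bL_{n+1}^k L_n = \bL_n^{k+1}$ and inserting $\langle \bL_n^{k+1}\one, \sigma_{n+1+k}\rangle$ in numerator and denominator factors this as a product of two convergent ratios: the first is $\langle f, \nu_{n,k+1}(\sigma_{n+1+k})\rangle$, which tends to $\int f\,dm_n$ by Lemma \ref{lem:mn}, and the second is $e^{r_{n,k+1}(\sigma_{n+1+k})}$, which tends to $\lambda_n$ by Lemma \ref{lem:lambda}. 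The product limit is $\lambda_n \int f\,dm_n$, giving the identity.

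For uniqueness, suppose $\{m'_n\}$ is another sequence of Borel probability measures with $L_n^* m'_{n+1} = c'_n m'_n$ for scalars $c'_n$. Iterating yields $(\bL_n^k)^* m'_{n+k} = p_n^k m'_n$ with $p_n^k := \prod_{j=0}^{k-1} c'_{n+j}$, and testing against $\one$ gives $p_n^k = \langle \bL_n^k \one, m'_{n+k}\rangle$. For every $k \geq \TE$ I would argue $\inf \bL_n^k \one > 0$, using the uniform bound $\sup \bL_n^\TE \one \leq e^\Delta \inf \bL_n^\TE \one$ from \ref{C3} combined with $\bL_n^\TE \one \in \Lambda_{n+\TE} \subset C^+_{n+\TE}$ (and iterating via the first half of \ref{C3} for $k > \TE$). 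Hence $p_n^k > 0$, the scalar cancels in the normalization of $\nu_{n,k}(m'_{n+k})$, and I obtain $\nu_{n,k}(m'_{n+k}) = m'_n$ for every $k \geq \TE$. Passing to the limit and invoking independence of the limit from the auxiliary sequence (Lemma \ref{lem:mn}) yields $m'_n = m_n$; comparing $L_n^* m_{n+1} = \lambda_n m_n$ with $L_n^* m'_{n+1} = c'_n m'_n$ then forces $c'_n = \lambda_n$, since $m_n$ is a nonzero measure.

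The only real subtlety is verifying $p_n^k \neq 0$ before dividing, and this is precisely where the uniform positivity clause of \ref{C3} is essential. Past that step, the argument is routine bookkeeping combining the weak* convergence of $\nu_{n,k}(\sigma_{n+k})$ to $m_n$ with the scalar limit formula for $\lambda_n$ already established in Lemma \ref{lem:lambda}.
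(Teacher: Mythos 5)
Your argument is correct and follows essentially the same route as the paper: for the eigenmeasure identity you test $L_n^* m_{n+1}$ against $f$, expand via the defining limit of $m_{n+1}$, use $\bL_{n+1}^k L_n = \bL_n^{k+1}$ to factor into the $\nu_{n,k+1}$-ratio and the $r_{n,k+1}$-ratio, and pass to the limit; for uniqueness you feed $\sigma_{n+k} = m'_{n+k}$ into the $\sigma$-independence of Lemma \ref{lem:mn} and cancel the accumulated scalar. The one place you go beyond what the paper writes is in justifying $p_n^k > 0$: the paper simply restricts attention to $\xi_n \in (0,\infty)$ in its uniqueness hypothesis, whereas you derive positivity from the second inequality in \ref{C3} -- a worthwhile observation (the cleanest version being that $p_n^\TE \geq \inf \bL_n^\TE\one > 0$ for every $n$, which forces every individual $c'_j > 0$ and hence $p_n^k > 0$ for all $k$).
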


\begin{proof}
    First, we show that ${L}_n^*m_{n+1}=\lambda_nm_n$. For all $ f \in C(X_n)$, we have 
    \begin{align*}
        \langle f , L_n^*m_{n+1}\rangle
&= \ip{ L_n f, m_{n+1} }
=\lim_{k\to\infty}\dfrac{\langle L_n f ,( \bL^k_{n+1})^*\sigma_{n+1+k}\rangle}{\langle\one,( \bL^k_{n+1})^*\sigma_{n+1+k}\rangle}\\        &=\lim_{k\to\infty}\dfrac{\langle \bL^k_{n+1}L_n f ,\sigma_{n+1+k}\rangle}{\langle \bL^k_{n+1}\one,\sigma_{n+1+k}\rangle}\cdot\dfrac{\langle \bL_n^{k+1}\one,\sigma_{n+1+k}\rangle}{\langle \bL^{k+1}_{n}\one,\sigma_{n+1+k}\rangle}\\        &=\lim_{k\to\infty}\dfrac{\langle \bL_n^{k+1}\one,\sigma_{n+1+k}\rangle}{\langle \bL^k_{n+1}\one,\sigma_{n+1+k}\rangle}\cdot\dfrac{\langle \bL_n^{k+1} f ,\sigma_{n+1+k}\rangle}{\langle \bL^{k+1}_{n}\one,\sigma_{n+1+k}\rangle}
        =\lambda_n {\langle f ,m_n}\rangle,
    \end{align*}
where the last equality uses Lemmas \ref{lem:lambda} and \ref{lem:mn} (sending $k+1\to\infty$).

Now suppose that for some sequence of probability measures $\nu_n\in M(X_n)$ and some $\xi_n \in (0,\infty)$, we have $(L_n)^*\nu_{n+1}=\xi_n\nu_n$ for all $n\in\NN$. 
To show that $\nu_n = m_n$, we recall from Lemma \ref{lem:mn} that the definition of $m_n$ is independent of which sequence of measures we use, and thus for any $ f \in C(X_n)$, we have
\[
\langle f ,m_n\rangle
=\lim_{k\to\infty}\dfrac{\langle \bL_n^k f ,\nu_{n+k}\rangle}{\langle \bL_n^k\one,\nu_{n+k}\rangle}
=\lim_{k\to\infty}\dfrac{\langle  f ,\xi_{n+k-1}\cdots\xi_n\nu_{n}\rangle}{\langle \one,\xi_{n+k-1}\cdots\xi_n\nu_{n}\rangle}
=\langle f ,\nu_n\rangle.
\]
Since the measures determine the scaling factors,
this completes the proof.
\end{proof}


\section{A bi-infinite nonstationary RPF theorem}
\label{sec:thm2}

In this section, we will prove Theorem \ref{thm:gen-two-sided}. Throughout the section, we assume that $\{(X_n,L_n,\Lambda_n)\}_{n\in\ZZ}$ satisfy \ref{C1}, \ref{C2}, and \ref{C3}.

Applying Theorem \ref{thm:gen-forward} to each forward-infinite part of this sequence, we obtain $\lambda_n \in (0,\infty)$ and $m_n \in M(X_n)$ for every $n\in \ZZ$ that satisfy \ref{get-lm}--\ref{get-exp1}. Using $\lambda_n$ and $m_n$, we make the following definitions for each $n\in \ZZ$:
\begin{align*}
\Lambda_n^1 := \big\{ f \in \Lambda_n : \ip{f,m_n} = 1 \big\}
\quad\text{ and }\quad
\widehat{L}_n := \lambda_n^{-1}L_n \colon \Lambda_n \to \Lambda_{n+1}.
\end{align*}

\begin{lemma}\label{lem:Ln1}
For every $f\in \Lambda_n^1$, we have $\widehat{L}_n f \in \Lambda_{n+1}^1$.
\end{lemma}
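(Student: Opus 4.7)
The claim has two parts to verify: that $\hat{L}_n f$ lies in $\Lambda_{n+1}$, and that it integrates to $1$ against $m_{n+1}$. Both are essentially bookkeeping given what has already been established, so the "proof" is really just assembling the correct ingredients rather than surmounting a substantive obstacle.

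For the cone membership, the plan is to invoke condition \ref{C3}, which guarantees $L_n \Lambda_n \subset \Lambda_{n+1}$, together with the fact that $\lambda_n > 0$ (item \ref{get-L*} of Theorem \ref{thm:gen-forward}, now available on the two-sided sequence by the preceding remark). Since a cone is closed under multiplication by positive scalars, $\hat{L}_n f = \lambda_n^{-1} L_n f \in \Lambda_{n+1}$ for any $f \in \Lambda_n$.

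For the normalization, the plan is a one-line computation using the dual pairing \eqref{eqn:Ln*2} and the eigenmeasure relation $L_n^* m_{n+1} = \lambda_n m_n$ from item \ref{get-L*}:
\[
\langle \hat{L}_n f, m_{n+1} \rangle
= \lambda_n^{-1} \langle L_n f, m_{n+1} \rangle
= \lambda_n^{-1} \langle f, L_n^* m_{n+1} \rangle
= \lambda_n^{-1} \langle f, \lambda_n m_n \rangle
= \langle f, m_n \rangle = 1,
\]
where the last equality uses the hypothesis $f \in \Lambda_n^1$. Combining the two parts yields $\hat{L}_n f \in \Lambda_{n+1}^1$. There is no genuine obstacle here; the lemma is recorded precisely because it lets us view $\hat{L}_n$ as a self-map on the affine slice $\Lambda_n^1$, which will be the natural setting for the fixed-point/contraction argument producing the eigenfunctions $h_n$ in the subsequent sections.
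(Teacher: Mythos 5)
Your proof is correct and matches the paper's argument; the normalization computation is identical, and you additionally spell out the cone-membership step (via \ref{C3} and $\lambda_n>0$) that the paper treats as immediate.
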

\begin{proof}
It follows from the definitions that
\[
\ip{\widehat{L}_n f, m_{n+1}}
= \lambda_n^{-1} \ip{L_n f, m_{n+1}}
= \lambda_n^{-1} \ip{f, L_n^* m_{n+1}}
= \lambda_n^{-1} \ip{f, \lambda_n m_n}
= \ip{f, m_n} = 1,
\]
which proves the lemma.
\end{proof}


\begin{lemma}\label{lem:get-h-1}
Let $C_1>0$ and $\edc \in (0,1)$ be given by Lemma \ref{lem:cone-contract}, and let $\Delta>0$ and $\TE\in\NN$ be as in \ref{C2}. 
Write $C_2 := C_1\Delta^{-1} e^{2\Delta}(e^\Delta -1)$. Given any $n\in \ZZ$ and $i,j\geq k\geq \TE$, we have
\begin{equation}\label{eqn:norm-close}
\|\widehat{\bL}_{n-i}^i f - \widehat{\bL}_{n-j}^j g \| \leq C_2 \edc^k
\quad\text{for all } f\in \Lambda_{n-i}^1
\text{ and } g\in \Lambda_{n-j}^1.
\end{equation}
Moreover, for each $i\geq \TE$ and $f\in \Lambda_{n-i}^1$, we have $\|\log \widehat{\bL}_{n-i}^i f\| \leq 2\Delta$.
\end{lemma}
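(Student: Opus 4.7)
The strategy is to reduce both claims to a common geometric picture: after time $\TE$, every normalized forward iterate $\hat{\bL}_{n-i}^i f$ (with $f \in \Lambda_{n-i}^1$ and $i \geq \TE$) lies in the set $Z := \hat{\bL}_{n-\TE}^\TE \Lambda_{n-\TE} \cap \Lambda_n^1$. On $Z$ I will establish two bounds on the projective metric $\Theta_n^+$ of $C_n^+$: a Hilbert diameter bound and a Hilbert distance-to-$\one$ bound, both in terms of $\Delta$. The log estimate then falls out of \eqref{eqn:sup/inf-0} and the normalization $\ip{f, m_n} = 1$, while the norm estimate falls out of Lemma \ref{lem:norm-Theta-2} combined with the cone contraction rate from Lemma \ref{lem:cone-contract}.

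The key factorization $\hat{\bL}_{n-i}^i f = \hat{\bL}_{n-\TE}^\TE(\hat{\bL}_{n-i}^{i-\TE} f)$ places every such iterate in $Z$, using the cone invariance in \ref{C3} on the inner factor (together with positive scaling to absorb the $\lambda$-factors) and Lemma \ref{lem:Ln1} for the $\Lambda_n^1$ membership. Projective invariance of the Hilbert metric combined with \ref{C3} gives $\diam_{\Theta_n}(Z) \leq \Delta$ directly. To bound $\Theta_n^+(f, \one)$, I would apply the triangle inequality through the reference function $\bL_{n-\TE}^\TE \one$, which lies in $\bL_{n-\TE}^\TE \Lambda_{n-\TE}$ because $\one \in \Lambda_{n-\TE}$ by \ref{C1}; the first piece $\Theta_n^+(f, \bL_{n-\TE}^\TE \one)$ is bounded by $\Delta$ via $\Theta_n^+ \leq \Theta_n$ (Lemma \ref{lem:nested-cones}), and the second piece $\Theta_n^+(\bL_{n-\TE}^\TE \one, \one) = \log(\sup \bL_{n-\TE}^\TE \one / \inf \bL_{n-\TE}^\TE \one)$ is bounded by $\Delta$ thanks to the second half of \ref{C3}, yielding $\Theta_n^+(f, \one) \leq 2\Delta$ for every $f \in Z$. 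Combining this with \eqref{eqn:sup/inf-0} gives $\sup f \leq e^{2\Delta} \inf f$, and since $\ip{f, m_n} = 1$ forces $\inf f \leq 1 \leq \sup f$, we obtain $e^{-2\Delta} \leq f \leq e^{2\Delta}$ pointwise, hence $\|\log f\| \leq 2\Delta$.

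For the norm bound, I would apply Lemma \ref{lem:norm-Theta-2} to $Z$ with $R = \Delta$ and $S = 2\Delta$, obtaining
\[
\|f - g\| \leq \Delta^{-1}(e^\Delta - 1) e^{2\Delta}\, \Theta_n^+(f, g) \qquad \text{for all } f, g \in Z.
\]
Then for $f = \hat{\bL}_{n-i}^i f_0$ and $g = \hat{\bL}_{n-j}^j g_0$ with $i, j \geq k \geq \TE$, both iterates lie in $\hat{\bL}_{n-k}^k \Lambda_{n-k}$, so Lemma \ref{lem:cone-contract} combined with Lemma \ref{lem:nested-cones} gives $\Theta_n^+(f, g) \leq \Theta_n(f, g) \leq C_1 \edc^{k+1} \leq C_1 \edc^k$, producing $\|f - g\| \leq C_3 \edc^k$ with $C_3 = C_1 \Delta^{-1} e^{2\Delta}(e^\Delta - 1)$ as stated. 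The main conceptual obstacle is recognizing that Lemma \ref{lem:norm-Theta-2} demands two separate pieces of $\Theta^+$-information about $Z$ (diameter and distance to $\one$), both of which must be pried out of \ref{C3} by routing through the reference function $\bL_{n-\TE}^\TE \one$; once this routing is in place, the remaining manipulations are routine bookkeeping.
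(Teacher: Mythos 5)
Your proof is correct and follows essentially the same route as the paper: place iterates in a set $Z$ with controlled $\Theta_+$-diameter and $\Theta_+$-distance to $\one$ (bounded by $\Delta$ and $2\Delta$ via \ref{C3} and routing through $\bL_{n-\TE}^\TE\one$), then combine Lemma \ref{lem:norm-Theta-2} with the contraction from Lemma \ref{lem:cone-contract}. The only cosmetic difference is that you fix $Z$ at level $\TE$ and separately use the $k$-step image for the contraction bound, whereas the paper takes $Z$ at level $k$ directly; both choices satisfy the hypotheses of Lemma \ref{lem:norm-Theta-2} and lead to the same constant.
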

\begin{proof}
Observe that $\widehat{\bL}_{n-i}^i f$ and $\widehat{\bL}_{n-j}^j g$ both lie in $\bL_{n-k}^k \Lambda_{n-k} \cap \Lambda_n^1$; indeed, they are contained in the first set since $i,j\geq k$, and in the second set by Lemma \ref{lem:Ln1}. 
We will apply Lemma \ref{lem:norm-Theta-2} with $Z = \bL_{n-k}^k \Lambda_{n-k} \cap \Lambda_n^1$. To do so, we must determine appropriate values of $R,S$ satisfying \eqref{eqn:RS}.

Since $k\geq \TE$, we see that $\diam_{\Theta_n} Z \leq \Delta$ by \ref{C2}, so we take $R=\Delta$. 
We also see from \ref{C2} and \eqref{eqn:sup/inf-0} that
\[
\Theta_+(\bL_{n-\TE}^\TE\one, \one) = \log \frac{\sup \bL_{n-\TE}^\TE\one}{\inf \bL_{n-\TE}^\TE\one} \leq \Delta,
\]
and so to determine an appropriate value of $S$, we observe that given any $h\in Z$, we have
\begin{equation}\label{eqn:h-1}
\Theta_+(h,\one)
\leq \Theta_+(h, \bL_{n-\TE}^\TE \one) + \Theta_+(\bL_{n-\TE}^\TE\one, \one)
\leq 2\Delta =: S,
\end{equation}
Now we can apply Lemma \ref{lem:norm-Theta-2} and obtain
\begin{equation}\label{eqn:hL-1}
\|\widehat{\bL}_{n-i}^i f - \widehat{\bL}_{n-j}^j g \|
\leq \Delta^{-1} (e^\Delta - 1) (e^{2\Delta}) \Theta_n(\widehat{\bL}_{n-i}^i f, \widehat{\bL}_{n-j}^j g).
\end{equation}
Moreover, Lemma \ref{lem:cone-contract} gives
\[
\Theta_n(\widehat{\bL}_{n-i}^i f, \widehat{\bL}_{n-j}^j g)
\leq \diam_{\Theta_n} \bL_{n-k}^k \Lambda_{n-k}
\leq C_1 \edc^k,
\]
and combining this with \eqref{eqn:hL-1} proves \eqref{eqn:norm-close}.

For the last claim in the lemma, we observe that $\widehat{\bL}_{n-i}^i f \in Z$, and every $h\in Z$ satisfies $\sup h \leq e^{2\Delta} \inf h$ by \eqref{eqn:h-1} and \eqref{eqn:sup/inf-0}; moreover, since $\int h \,dm_n = 1$, we have $\inf h \leq 1\leq \sup h$, so $h(x) \in [e^{-2\Delta},e^{2\Delta}]$ for all $x\in X_n$, which completes the proof.
\end{proof}

Now we can prove \ref{get-h} and \ref{get-exp2}. Given any sequence $\{ f_\ell \in \Lambda_\ell\}_{\ell \leq n}$, consider the normalized sequence $g_\ell = f_\ell / \ip{f_\ell, m_\ell}$, which satisfies $g_\ell \in \Lambda_\ell^1$. 
Then the sequence of functions on the right-hand side of \eqref{eqn:hn}
can be rewritten as
\[
\frac{\bL_{n-k}^k f_{n-k}}{\bl_{n-k}^k \ip{f_{n-k},m_{n-k}}}
= \widehat{\bL}_{n-k}^k g_{n-k}.
\]
This sequence is uniformly Cauchy by Lemma \ref{lem:get-h-1}, and thus converges uniformly to some $h_n \in C(X_n)$. The fact that the limit is independent of the choice of $f_\ell$ follows from the fact that the upper bound in \eqref{eqn:norm-close} does not depend on the choice of $f$ or $g$. The exponential convergence in \eqref{eqn:hn-exp} follows immediately from \eqref{eqn:norm-close}.

To prove \ref{get-Lh}, first observe that
$h_n \in \Lambda_n \cup \{0\}$ since this set is closed in the uniform norm by \ref{C1}. Since each $\widehat{\bL}_{n-k}^k g_{n-k}$ lies in $\Lambda_n^1$, we have
\begin{equation}\label{eqn:int-hn}
\int h_n \,dm_n = \lim_{k\to\infty} \int \widehat{\bL}_{n-k}^k g_{n-k} \,dm_n = 1,
\end{equation}
so $h_n \in \Lambda_n$. Strict positivity of $h_n$ follows from the last inequality in Lemma \ref{lem:get-h-1}, which in fact yields the following explicit bounds:
\begin{equation}\label{eqn:h-range}
e^{-2\Delta} \leq h_n(x) \leq e^{2\Delta}
\quad\text{for all } x\in X_n.
\end{equation}
To prove $L_n h_n = \lambda_n h_{n+1}$, 
fix $\{ f_\ell \in \Lambda_\ell^1 \}_{\ell\leq n}$ and
observe that since $L_n$ (and hence $\widehat{L}_n$) is bounded, we have
\[
\widehat{L}_n h_n = \lim_{k\to\infty} \widehat{L}_n \widehat{\bL}_{n-k}^k f_{n-k}
= \lim_{k\to\infty} \widehat{\bL}_{n+1-(k+1)}^{k+1} f_{n-k} = h_{n+1}.
\]
It only remains to prove the uniqueness claim in \ref{get-h!}. Suppose that $\{g_n \in \Lambda_n\}_{n\in \ZZ}$ and $\{ \xi_n \in \RR \}_{n\in \ZZ}$ are such that 
$\int g_n \,dm_n = 1$ and $L_n g_n = \xi_n g_{n+1}$ for all $n\in \ZZ$. 
Then we have
\[
\xi_n = \ip{\xi_n g_{n+1}, m_{n+1}} = \ip{L_n g_n, m_{n+1}}
= \ip{g_n, L_n^* m_{n+1}} = \ip{ g_n, \lambda_n m_n } = \lambda_n
\]
for every $n$, and thus for every $n\in \ZZ$ and $k\geq 0$, we have
\[
\bL_{n-k}^k g_{n-k} = \Big(\prod_{i=n-k}^{n-1} \lambda_i\Big) g_n
= \bl_{n-k}^k g_n
\quad\Rightarrow\quad
\widehat{\bL}_{n-k}^k g_{n-k} = g_n.
\]
As $k\to\infty$, the left-hand side converges to $h_n$ by the fact that the limit in \eqref{eqn:hn} is independent of the choice of starting functions, and we conclude that $g_n = h_n$. This completes the proof of Theorem \ref{thm:gen-two-sided}.

\section{Convergence to Pseudo-invariant Measures}\label{sec:convergence-pseudo}

In this section, we prove Theorem \ref{thm:pseudo-inv}. Unlike in \S\ref{sec:thm1} and \S\ref{sec:thm2}, here it is important that our operators $L_n$ are the RPF operators associated to maps $T_n$ and potentials $\ph_n$. In particular, given any $f\in C(X_{n+1})$ and $g\in C(X_n)$, we have the following for each $x\in X_n$:
\[
L_n (g \cdot (f\circ T_n))(x)
= \sum_{y\in T_n^{-1}(x)} e^{\ph_n(y)} (g \cdot (f\circ T_n))(y)
= f(x) (L_n g)(x).
\]
This implies that
\begin{equation}\label{eqn:gfT}
\ip {g\cdot (f\circ T_n), m_n}
= \lambda_n^{-1} \ip{g\cdot (f\circ T_n), L_n^* m_{n+1}}
= \lambda_n^{-1} \ip{f \cdot L_n g, m_{n+1}}.
\end{equation}
Taking $g = h_n$, \eqref{eqn:gfT} gives the following for every $f\in C(X_{n+1})$:
\[
\int_{X_n} (f\circ T_n) h_n \,dm_n = \lambda_n^{-1} \int_{X_{n+1}} f \cdot (L_n h_n) \,dm_{n+1}
= \int_{X_{n+1}} f h_{n+1} \,dm_{n+1}.
\]
This implies that for $\mu_n-h_nm_n$, we have
\[
\int_{X_{n+1}} f\,d((T_n)_* \mu_n) = \int_{X_n} (f\circ T_n) \,d\mu_n = \int_{X_{n+1}} f\,d\mu_{n+1}.
\]
Hence, we conclude that $(T_n)_* \mu_n = \mu_{n+1}$ as claimed.

To prove the claims in \eqref{eqn:tilde-L}, first recall the normalized operator $\widetilde{L}_n$ for the potential $\Tilde{\varphi}=\varphi_n+\log h_n-\log h_{n+1}\circ T_n-\log\lambda_n$. Observe that given $f\in C(X_n)$ and $x\in X_{n+1}$, we have
\begin{equation}\label{eqn:tilde-Ln}
(\widetilde{L}_n f)(x)
= \sum_{y\in T_n^{-1} x} e^{\widetilde\ph_n(y)} f(y)
= \sum_{y\in T_n^{-1} x} \frac{e^{\ph_n(x)} h_n(y)}{h_{n+1}(T_n y) \lambda_n} f(y)
= \frac {L_n(h_n f)(x)}{\lambda_n h_{n+1}(x)} .
\end{equation}
Taking $f=\one$, this gives
\[
\widetilde{L}_n \one = \frac{L_n h_n}{\lambda_n h_{n+1}} = \one,
\]
which proves the first part of \eqref{eqn:tilde-L}. Moreover, for every $f\in C(X_n)$, \eqref{eqn:tilde-Ln} gives
\begin{align*}
\ip{f, \widetilde{L}_n^*\mu_{n+1}} &= \ip{\widetilde{L}_n f, \mu_{n+1}}
= \int_{X_{n+1}} \frac{L_n(h_n f)}{\lambda_n h_{n+1}} \,d\mu_{n+1}
= \ip{L_n(h_n f), \lambda_n^{-1} m_{n+1}} \\
&= \ip{h_n f, \lambda_n^{-1} L_n^* m_{n+1}} = \ip{h_n f, m_n}
= \int_{X_n} f h_n \,dm_n = \ip{f, \mu_n},
\end{align*}
and thus $\widetilde{L}_n^* \mu_{n+1} = \mu_n$, proving the second part of \eqref{eqn:tilde-L}. This finishes the proof of Theorem \ref{thm:pseudo-inv}.

\section{H\"older dependence}
\label{sec:Holder}

In this section we prove Theorems \ref{thm:HolDep-intro} and \ref{thm:continuity} on H\"older dependence of $\lambda_n,m_n,h_n$. We start in \S\ref{sec:op-dep} with the proof of Theorem \ref{thm:continuity}, showing how these objects depend H\"older continuously on the operators. Then in \S\ref{sec:sys-dep}, we show that the operators depend H\"older continuously on $((T_n,\ph_n))_n$, proving Theorem \ref{thm:HolDep-intro}.

\subsection{Dependence on operators}\label{sec:op-dep}

Now we fix cones $\Lambda_n \subset C_n^+ \subset C(X_n)$ that satisfy \ref{C1}, and we fix parameters $\kappa\geq 1$, $\Delta>0$, and $\tau\in \NN$. Recall that $\LL_0^+$ and $\LL_0$ denote the spaces of one- and two-sided sequences of operators $\bL$ such that \ref{C2}--\ref{C3} hold with this choice of $\Lambda_n,\kappa,\Delta,\tau$.
Recall also that we define a metric on $\LL_0$ in \eqref{eqn:dL} by
\[
d_\LL(\bL,\bL') := \sum_{n\in \ZZ} 2^{-|n|} \|L_n - L_n'\|_{\Lambda_n},
\]
and similarly for $d_{\LL^+}$ with $\sum_{n\geq 0}$.
Theorem \ref{thm:continuity} claims the following results with respect to 
these metrics:
\begin{enumerate}[label=\textup{(\alph{*})},leftmargin=*]
\item The map $\bL\in \LL_0^+ \mapsto \lambda_0\in (0,\infty)$ is locally H\"older continuous.
\item For all $f \in \Lambda_0$, the map $\bL\in \LL_0^+ \mapsto \int f \,dm_0\in \RR$ is locally H\"older continuous.
\item The map $\bL\in \LL_0 \mapsto h_0\in C(X_0)$ is locally H\"older continuous with respect to the uniform norm on $C(X_0)$.
\end{enumerate}

To prove these claims, we will need the following auxiliary lemmas.

\begin{lemma}\label{lem:diff L}
For all $n\in\ZZ$, $j\in\NN$, $f\in\Lambda_n$, and $\bL,\bL'\in \LL_0^+$, we have
\begin{equation}\label{eqn:Lnjf}
\|\bL_n^j f-(\bL')_n^j f\|_\infty\leq 2^{|n|}(2\kappa)^{j}\|f\|_{\infty}d_\LL(\bL,\bL').
\end{equation}
\end{lemma}
\begin{proof}
Fix $n\in\ZZ$. 
We will prove \eqref{eqn:Lnjf} by induction in $j$.
By the definition of $d_\LL$, we have
\begin{equation}\label{eqn:LndL}
\|L_n f-(L')_n f\|_\infty\leq 2^{|n|}\|f\|_{\infty}d_\LL(\bL,\bL').
\end{equation}
Since $1\leq 2\kappa$, this proves \eqref{eqn:Lnjf} for $j=1$. Now suppose that \eqref{eqn:Lnjf} holds for some $j\geq 1$. Then we have
    \begin{align*}
        \|\bL_{n}^{j+1} f-(\bL')_{n}^{j+1} f\|_\infty
        &\leq \|L_{n+j}(\bL_{n}^{j} f)-L_{n+j}((\bL')_{n}^{j} f)\|_\infty\\
        &\qquad\qquad\qquad +\|L_{n+j}((\bL')_{n}^{j} f)-(L')_{n+j}((\bL')_{n}^{j} f)\|_\infty
\end{align*}
Since $\|L_{n+j}\|_{\Lambda_{n+j}} \leq \kappa$ and $\bL_n^j f \in \Lambda_{n+j}$, the inductive hypothesis gives
\begin{align*}
\|L_{n+j}(\bL_{n}^{j} f)-L_{n+j}((\bL')_{n}^{j} f)\|_\infty
&\leq 
\|L_{n+j}\|_{\Lambda_{n+j}} \|\bL_{n}^{j} f-(\bL')_{n}^{j} f\|_\infty \\
&\leq \kappa 2^{|n|} (2\kappa)^j \|f\|_\infty d_\LL(\bL,\bL').
\end{align*}
Similarly, since $\|(\bL')_n^j\|_{\Lambda_n} \leq \kappa^j$, we can use \eqref{eqn:LndL} to get
\begin{align*}
\|L_{n+j}((\bL')_{n}^{j} f)-(L')_{n+j}((\bL')_{n}^{j} f)\|_\infty
&\leq 2^{|n+j|} \|(\bL')_n^j f\|_\infty d_\LL(\bL,\bL') \\
&\leq (2\kappa)^j 2^{|n|} d_\LL(\bL,\bL').
\end{align*}
Combining these estimates, we obtain
\[
\|\bL_{n}^{j+1} f-(\bL')_{n}^{j+1} f\|_\infty
\leq (\kappa + 1) 2^{|n|} (2\kappa)^j \|f\|_\infty d_\LL(\bL,\bL'),
\]
which completes the inductive step and proves the lemma since $\kappa + 1 \leq 2\kappa$.
\end{proof}

\begin{lemma}\label{lem:comparable}
For any $0<\gamma<1<\theta$ and constants $D_1,D_2>0$,
there exist $C>0$ and $\alpha \in (0,1)$ such that for every $k\in \NN$ and every $0 < r < (\gamma/\theta)^k$, there exists $j\in \NN$ such that $j\geq k$ and 
\[
D_1\gamma^j+D_2\theta^j r\leq C r^\alpha.
\]
\end{lemma}
\begin{proof}
    Note that $0<\gamma<1<\theta$ implies $\rho=\gamma/\theta<1$ and $0<\alpha:=\log\gamma/\log\rho<1$. Let $j\in\NN$ be such that
    \[
    \rho^{j+1}\leq r
    \leq \rho^j.
    \]
Since $\rho^{j+1} \leq r < \rho^k$ by assumption, we have $j+1 > k$, so $j\geq k$.
    For this choice of $j$, we have
    \[
    \gamma^j= e^{j\log\gamma}= e^{\alpha j\log\rho}= \rho^{\alpha j} \leq \rho^{-\alpha} r^\alpha.
    \]
    Thus, $\theta^j r\leq \gamma^j\leq\rho^{-\alpha} r^\alpha$. The result follows with $C:=\rho^{-\alpha}\max\{D_1,D_2\}$.
\end{proof}

\vskip5pt
In the ``locally H\"older continuous'' arguments that follow, we will always work with $\bL$, $\bL'$ such that $d_\LL(\bL,\bL') < (\gamma/\theta)^\tau$ for an appropriate choice of $\theta$, so that we can apply Lemma \ref{lem:comparable} with $j\geq \tau$.

Now we prove Theorem \ref{thm:continuity} via a sequence of three lemmas.

\begin{lemma}\label{lem:eigenvalue}
   The map $\LL_0^+ \to (0,\infty)$ taking $\bL \mapsto \lambda_0$ is locally H\"older continuous.
\end{lemma}
\begin{proof}
It suffices to show that $r_0=\log\lambda_0$ is locally H\"older in $\bL$. 

Given $\bL,\bL' \in \LL_0^+$
and $j\in \NN$, fix a probability measure $\sigma\in{M}(X_{j})$.
Let 
\begin{equation}\label{eqn:r0j'}
r_{0,j}=\log\frac{\langle \bL_0^j \one, \sigma \rangle}{\langle \bL_{1}^{j-1} \one, \sigma \rangle}
\quad\text{and}\quad r_{0,j}'=\log\frac{\langle (\bL')_0^j \one, \sigma \rangle}{\langle (\bL')_{1}^{j-1} \one, \sigma \rangle}.
\end{equation}
By Lemma \ref{lem:lambda}, there exists $C_1>0$ and $\gamma\in(0,1)$, depending only on $\Lambda_n,\kappa,\Delta,\tau$, such that for all $j\geq\tau$, we have
\begin{equation}\label{eqn:r0j}
|r_{0,j}-r_0|\leq C_1\gamma^j \quad \text{ and } \quad |r_{0,j}'-r'_0|\leq C_1\gamma^j.
\end{equation}

To bound $|r_{0,j}-r_{0,j}'|$, we first observe that by \ref{C2}, we have
\[
\min\big( 
\langle \bL_0^j \one, \sigma \rangle,\ 
\langle \bL_{1}^{j-1} \one, \sigma \rangle,\
\langle (\bL')_0^j \one, \sigma \rangle,\ 
\langle (\bL')_{1}^{j-1} \one, \sigma \rangle
\big) \geq \kappa^{-j},
\]
and that as in \eqref{eqn:logrs}, we have
\[
|\log s - \log t| \leq \kappa^j |s-t|
\quad\text{for all } s,t\geq \kappa^{-j}.
\]
Using these together with the definitions in \eqref{eqn:r0j'} gives
\begin{align*}
        |r_{0,j}-r_{0,j}'|
        &\leq \big|\log\langle \bL_0^j\one,\sigma\rangle-\log\langle (\bL')_0^j\one,\sigma\rangle\big|\\ &\qquad\qquad+\big|\log\langle \bL_1^{j-1}\one,\sigma\rangle-\log\langle (\bL')_1^{j-1}\one,\sigma\rangle\big|\\
        &\leq \kappa^j \big(\|\bL_0^j \one-(\bL')_0^j \one\|_\infty+\|\bL_1^{j-1} \one-(\bL')_1^{j-1} \one\|_\infty\big).
\end{align*}
Applying Lemma \ref{lem:diff L} to each of the terms in this expression, we get
\begin{align*}
|r_{0,j} - r_{0,j}'|
&\leq \kappa^j \big( (2\kappa)^j d_\LL(\bL,\bL') + 2 (2\kappa)^{j-1} d_\LL(\bL,\bL') \big) \\
&\leq 2 \kappa^j (2\kappa)^j d_\LL(\bL,\bL').
\end{align*}

Writing $\theta := 2\kappa^2$ and recalling \eqref{eqn:r0j}, we have
\[
|r_{0}-r_0'| \leq 2C_1\gamma^j +2 \theta^j d_\LL(\bL,\bL'). 
\]
Now we can apply Lemma \ref{lem:comparable} with $D_1 = 2C_1$ and $D_2 = 2$ to get $C>0$ and $\alpha \in (0,1)$ such that for every $\bL,\bL' \in \LL_0^+$ satisfying $d_\LL(\bL,\bL') < (\gamma/\theta)^\tau$, we have
$|r_0 - r_0'| \leq C d_\LL(\bL,\bL')^\alpha$, which proves the lemma.
\end{proof}

\begin{lemma}
\label{lem:mHolder}
   For all $f \in \Lambda_0$, the map $\LL_0^+ \to \RR$ taking $\bL \mapsto \int f \,dm_0$ is locally H\"older continuous.
\end{lemma}
\begin{proof}
Let $\theta := 2\kappa^4$, and fix $\bL,\bL' \in \LL_0^+$ such that $d_\LL(\bL,\bL') < (\gamma/\theta)^\tau$.
For convenience, we write $F_j={\langle \bL_0^j f ,\sigma\rangle}/{\langle \bL_0^j\one,\sigma\rangle}$ and similarly for $F_j'$. So by Lemma \ref{lem:fibEigMeas}, $F_j \to \langle f,m_0\rangle$ and $F_j' \to \langle f,m_0'\rangle$.

Fix $j\geq \tau$. Let $\sigma\in M(X_j)$ and $f\in\Lambda_0$. Then
\begin{equation}\label{eqn:fm0}
\begin{aligned}
        \big|\langle f,m_0\rangle-\langle f,m_0'\rangle\big|
        &\leq \big|\langle f,m_0\rangle-F_j\big|
        + \big|F_j-F_j'\big| 
        + \big|F_j'-\langle f,m_0'\rangle\big|\\
        &\leq 2C_1\gamma^j
        +\big|F_j-F_j'\big|
    \end{aligned}
\end{equation}
    where $C_1>0$ and $\gamma>1$ are as in Lemma \ref{lem:lambda}. (In particular, they do not depend on $\bL$, $\bL'$, or $j$.)

    Now we bound the quantity $|F_j - F_j'|$. By conditions \ref{C1} and \ref{C2}, we have $\Theta^+(\bL_0^\tau f, \one) \leq \Delta$, so $e^{-\Delta} \leq \bL_0^\tau f(x) \leq e^\Delta$ for all $x\in X_\tau$, and consequently
\[
\kappa^{-j} e^{-\Delta} \leq
\kappa^{-(j-\Delta)} e^{-\Delta} \leq \|\bL_0^j f\|_\infty \leq \kappa^{j-\Delta} e^\Delta
\leq \kappa^j e^\Delta.
\]
A similar bound holds for $(\bL')_0^j f$, and thus we have
\begin{gather*}
\min\big( 
\langle \bL_0^j f, \sigma \rangle,\ 
\langle \bL_0^j \one, \sigma \rangle,\
\langle (\bL')_0^j f, \sigma \rangle,\ 
\langle (\bL')_{0}^{j} \one, \sigma \rangle
\big) \geq \kappa^{-j} e^{-\Delta}, \\
\min\big( F_j,\ F_j'\big) \geq \kappa^{-2j} e^{-2\Delta}.
\end{gather*}
Now we can once again use the bound from \eqref{eqn:logrs} to obtain
\[
|F_j-F_j'|\leq e^{2\Delta} \kappa^{2j} |\log F_j-\log F_j'|.
\]
Another application of \eqref{eqn:logrs} and Lemma \ref{lem:diff L} gives
\begin{align*}
        \big|\log F_j-\log F_j'\big|
        &\leq \big|\log\langle \bL_0^jf,\sigma\rangle-\log\langle (\bL')_0^jf,\sigma\rangle\big|\\ 
&\qquad\qquad+\big|\log\langle \bL_0^{j}\one,\sigma\rangle-\log\langle (\bL')_0^{j}\one,\sigma\rangle\big|\\
        &\leq e^{\Delta} \kappa^j \big(\|\bL_0^j f-(\bL')_0^j f\|_{\infty}+\|\bL_0^{j} \one-(\bL')_0^{j} \one\|_\infty\big)\\
        &\leq e^{\Delta} \kappa^j (1+\|f\|_{\infty}) (2\kappa)^{j}d_\LL(\bL,\bL').
\end{align*}
Combining the previous two estimates gives
\[
|F_j - F_j'| \leq e^{3\Delta} 
(1+\|f\|_{\infty}) (2\kappa^4)^{j}d_\LL(\bL,\bL').
\]

Together with \eqref{eqn:fm0}, we see that for all $j\geq \tau$, we have
\[
\big|\langle f,m_0\rangle-\langle f,m_0'\rangle\big|
    \leq 2C_1 \gamma^j
    + e^{3\Delta} (1+\|f\|_{\infty}) (2\kappa^4)^j d_\LL(\bL,\bL').
\]
Now we can apply Lemma \ref{lem:comparable} with $\theta = 2\kappa^4$, $D_1 = 2C_1$, and $D_2 = e^{3\Delta}(1+\|f\|_\infty)$. Observe that since we assumed $d_\LL(\bL,\bL') < (\gamma/\theta)^\tau$, the $j$ produced by Lemma \ref{lem:comparable} satisfies $j\geq \tau$, and we obtain the  desired result.
\end{proof}

\begin{lemma}
    The map $\LL_0 \to C(X_0)$ taking $\bL \mapsto h_0$ is locally H\"older continuous with respect to the uniform norm on $C(X_0)$.
\end{lemma}
\begin{proof}
By Lemma \ref{lem:get-h-1}, $\widehat{\bL}_{-j}^j \one$ converges uniformly to $h_0$ with exponential speed: there exist $C_2>0$ and $\gamma\in (0,1)$, independent of $\bL,\bL'$, such that we have
    \begin{align*}
        \|h_0-h_0'\|_\infty
        &\leq \big\|h_0-\widehat{\bL}_{-j}^j \one\big\|_\infty + \big\|\widehat{\bL}_{-j}^j \one-\widehat{(\bL')}_{-j}^j \one\big\|_\infty + \big\|\widehat{(\bL')}_{-j}^j \one-h_0'\big\|_\infty\\
        &\leq 2C_2 \gamma^j + 
2^{|-j|} (2\kappa)^j d_\LL(\widehat{\bL},\widehat{\bL'}),
    \end{align*}
for all $j\in \NN$, where the last inequality also uses Lemma \ref{lem:diff L}. 
    Applying Lemma \ref{lem:comparable} with $D_1 = 2C_2$, $D_2 = 1$, and $\theta = 4\kappa$ gives the desired result.
\end{proof}

\subsection{Dependence on maps and potentials}\label{sec:sys-dep}

Now we turn our attention to the case when the operator sequence $\bL$ is determined by a sequence $\{(X_n,T_n,\ph_n)\}_n$ as in \eqref{eqn:Ln}, and prove Theorem \ref{thm:HolDep-intro}. Recall that we fix a sequence of compact metric spaces $\{(X_n,d_n)\}_n$, together with constants $D,\tau\in \NN$, $\rho,\beta\in (0,1)$, and $\delta,V,H>0$, and let $\mathcal{F}$ denote the collection of sequences of map-potential pairs $(\bT,\bph)=\{(T_n,\ph_n)\}_{n\in\ZZ}$ satisfying \allA\ for these choices of parameters, and similarly for $\mathcal{F}^+$. The arguments in Section \ref{sec:get-cones} show that the sequence of operators
\[
\bL(\bT,\bph)_n(f(x))=\sum_{y\in T_n^{-1}x}e^{\ph_n(y)}f(y)
\]
 satisfies conditions \ref{C1}--\ref{C3} with cones $\Lambda_n$ and constants $\kappa,\Delta,\tau$ that depend only on $D,\tau,\rho,\delta,V,H,\beta$, and not on the specific choice of $(\bT,\bph)$. Thus $(\bT,\bph) \mapsto \bL(\bT,\bph)$ is a function from $\mathcal{F}$ to $\mathcal{L}$, where these spaces are understood to be defined with the appropriate parameters, and equipped with the metrics $d_{\mathcal{F}}$ and $d_\LL$. We will prove that this function is H\"older continuous, and then Theorem \ref{thm:HolDep-intro} will follow from Theorem \ref{thm:continuity}.

We will need the following lemma, which gives control over the distance between preimages of a point $x\in X_{n+1}$ under $T_n$ and $T_n'$, and is reminiscent of the argument in Lemma \ref{lem:Q-kappa}, where we paired preimages of two nearby points under a single map. 
Given two maps $T_n,T_n' \colon X_n \to X_{n+1}$, it will be convenient to use the following notation for the uniform distance between them:
\[
d_\infty(T_n,T_n') := \sup_{x\in X_n} d_{n+1}(T_n x, T_n' x).
\]

\begin{lemma}
\label{lem:preimages}
Suppose that $\bT$ and $\bT'$ satisfy \ref{A:pre}--\ref{A:exp} for the same choice of $D,\delta,\rho$, and that $n\in \ZZ$ satisfies $d_\infty(T_n,T_n') < \delta$. Then for every $x\in X_{n+1}$, there exists a bijection $\pi_x \colon T_n^{-1}(x) \to (T_n')^{-1}(x)$ such that
\begin{equation}\label{eqn:dpi}
d_n(y,\pi_x(y)) \leq d_\infty(T_n,T_n')
\quad\text{for all }
y\in T_n^{-1}(x).
\end{equation}
\end{lemma}
\begin{proof}
Given $x\in X_{n+1}$ and $y\in T_n^{-1}(x)$, we have
\[
d_{n+1}(T_n'(y), x)
= d_{n+1}(T_n'(y), T_n(y)) \leq d_\infty(T_n,T_n') < \delta,
\]
so $x\in B(T_n'(y),\delta) \subset T_n'(B(y,\delta))$ by \ref{A:exp}. Thus there exists a unique point $\pi_x(y) \in B(y,\delta)$ such that $T_n'(\pi_x(y)) = x$. Reversing the roles of $T_n$ and $T_n'$ produces an inverse map for $\pi_x$, so we see that $\pi_x$ is a bijection with the desired properties.
\end{proof}

We can now prove Theorem \ref{thm:HolDep-intro}.
For convenience, we write $L_nf = \bL(\bT,\bph)_n f$.

\begin{proof}[Proof of Theorem \ref{thm:HolDep-intro}]
As discussed above, it suffices to show that there exists $C>0$ such that whenever $\dF((\bT,\bph),(\bT',\bph'))$ is small, we have
    \[
    d_\LL(\bL,\bL')\leq Cd_{\mathcal{F}} ((\bT,\bph),(\bT',\bph'))^\beta.
    \]
(Here $\beta \in (0,1)$ is the same H\"older exponent as we assume for the potentials.)
This requires us to bound $\|L_n f - L_n' f\|_\infty$ for all $f\in \Lambda_n$ with $\|f\|_\infty=1$. Start with the observation that \ref{A:pre} and \ref{A:Hol} give $\|L_n f\|_\infty \leq D e^V \|f\|_\infty$, so for every $n\in \ZZ$, we have
\begin{equation}\label{eqn:LnDeV}
\|L_n - L_n' \|_{\Lambda_n} \leq 2D e^V.
\end{equation}

Let $N\in \NN$ be maximal such that $2^N d_{\mathcal{F}}((\bT,\bph),(\bT',\bph')) < \delta$. 
For $|n|>N$, we will use the trivial bound \eqref{eqn:LnDeV}. For $|n|\leq N$, we will see that we can do better using the fact that
\begin{equation}\label{eqn:dinf-dF}
d_\infty(T_n,T_n') \leq 2^{|n|} \dF((\bT,\bph),(\bT',\bph')) < \delta,
\end{equation}
which allows us to apply
Lemma \ref{lem:preimages}. In the following estimates, the supremum is over all pairs $(f,x) \in \Lambda_n(Q) \times X_{n+1}$ such that $\|f\|_\infty = 1$, and we write $y' := \pi_x(y)$  whenever $y\in T_n^{-1}(x)$:
\begin{equation}
    \label{eqn:TransOpBound}
    \begin{aligned}
        \|L_n-L'_n\|_{\Lambda_n} &\leq\sup_{(f,x)}\bigg|\sum_{y\in T_n^{-1}x}e^{\ph_n(y)}f(y)-\sum_{y'\in (T')_n^{-1}x}e^{\ph_n'(y')}f(y')\bigg|\\
        &\leq \sup_{(f,x)} \sum_{y\in T_n^{-1}x}\big(e^{\|\ph_n\|}\big|f({y}) - f(y')\big|
        + \|f\|_{\infty}\big|e^{\ph_n(y)}-e^{\ph_n'(y')}\big|\big).
    \end{aligned}
    \end{equation}

To bound the first term, we observe that the quantity $\frac 1t (e^{Qt} - 1)$ is bounded above on $(0,\delta^\beta]$, so there exists $C_4>0$ such that
\[
e^{Qt} - 1 \leq C_4t
\quad\text{for all } t\in (0,\delta^\beta].
\]
Since $|f(y) - f(y')| < \delta$ for each $y\in T_n^{-1} x$ by Lemma \ref{lem:preimages}, we have (using the fact that $f\in \Lambda_n(Q)$ and $\|f\|_\infty = 1$):
\begin{equation}\label{eqn:fyfy'}
|f(y) - f(y')| \leq e^{Q d_n(y,y')^\beta} - 1
\leq C_4 d_n(y,y')^\beta.
\end{equation}
To bound the second part of \eqref{eqn:TransOpBound}, observe that there is $C_5>0$ such that
\[
|e^t - e^s| \leq C_5 |t-s|
\quad\text{for all } t,s\in [-V,V],
\]
and thus
\begin{align*}
|e^{\ph_n(y)} - e^{\ph_n'(y')}| &\leq
C_5 |\ph_n(y) - \ph_n'(y')| \\
&\leq C_5 \big( |\ph_n(y) - \ph_n(y')| + |\ph_n(y') - \ph_n'(y')|\big) \\
&\leq C_5 \big( H d_n(y,y')^\beta + \|\ph_n - \ph_n'\|_\infty \big).
\end{align*}

Combining this bound with \eqref{eqn:TransOpBound} and \eqref{eqn:fyfy'}, we get
\begin{align*}
\|L_n-L'_n\|_{\Lambda_n}
&\leq \sup_{(f,x)} \sum_{y\in T_n^{-1} x}
\big( e^V C_4 d_n(y,y')^\beta
+ C_5 H d_n(y,y')^\beta + C_5 \|\ph_n - \ph_n'\|_\infty \big) \\
&\leq D(e^V C_4 + C_5 H) d_\infty(T_n,T_n')^\beta
+ DC_5 \|\ph_n - \ph_n'\|_\infty.
\end{align*}
Writing $C_6 := D(e^V C_4 + C_5 H)$, we can use this bound for $|n|\leq N$ together with the trivial bound \eqref{eqn:LnDeV} for $|n|> N$ to obtain
\begin{align*}
d_\LL(\bL,\bL')
&= \sum_{|n| > N} 2^{-|n|} \|L_n - L_n'\|_{\Lambda_n}
+ \sum_{|n| \leq N} 2^{-|n|} \|L_n - L_n'\|_{\Lambda_n} \\
&\leq \sum_{|n| > N} 2^{-|n|} 2D e^V
+ \sum_{|n| \leq N} 2^{-|n|} \big( C_6 d_\infty(T_n,T_n')^\beta + DC_5 \|\ph_n - \ph_n'\|_\infty \big).
\end{align*}
For convenience, write $R := \dF((\bT,\bph),(\bT',\bph'))$, and observe that
\[
d_\infty(T_n,T_n') \leq 2^{|n|} R,
\qquad\text{so}\qquad
d_\infty(T_n,T_n')^\beta \leq 2^{\beta|n|} R^\beta,
\]
which allows us to continue the above estimates by writing
\begin{align*}
d_\LL(\bL,\bL') &\leq 4D e^V 2^{-N}
+ \sum_{|n| \leq N} 2^{-|n|} C_6 2^{\beta|n|} R^\beta 
+ \sum_{|n| \leq N} 2^{-|n|} DC_5\|\ph_n - \ph_n'\|_\infty \\
&\leq 4De^V 2^{-N} + \Big(\sum_{n\in \ZZ} 2^{-(1-\beta)|n|} \Big) C_6 R^\beta
+ DC_5 \Big( \sum_{n\in \ZZ} 2^{-|n|} \|\ph_n - \ph_n'\|_\infty \Big).
\end{align*}
Since $N$ was maximal such that $2^N R < \delta$, we have $2^N R \geq \delta/2$, so $2^{-N} \leq 2\delta^{-1} R$, and writing $C_7 :=  \Big(\sum_{n\in \ZZ} 2^{-(1-\beta)|n|} \Big) C_6 < \infty$ (since $\beta \in (0,1)$), we obtain
\[
d_\LL(\bL,\bL') \leq 4De^V \cdot 2\delta^{-1} R
+ C_7 R^\beta + DC_5 R
\leq (8De^V\delta^{-1} + C_7 + DC_5) R^\beta,
\]
which completes the proof.
\end{proof}

\bibliographystyle{amsalpha}
\bibliography{Cones}
\end{document}